\definecolor{webgreen}{rgb}{0,.5,0}
\definecolor{webbrown}{rgb}{.6,0,0}
\newtheorem{corollary}{Corollary}[section]
\newcommand{\seqnum}[1]{\href{http://www.research.att.com/cgi-bin/access.cgi/as/~njas/sequences/eisA.cgi?Anum=#1}
{\underline{#1}}}
\begin{document}
\begin{center}
\vskip 1cm{\LARGE\bf Redundant generating functions\\ in lattice
path enumeration } \vskip 1cm
\large Jong Hyun Kim \\
Vaughn College of Aeronautics and Technology \\
Flushing, NY 11369 \\
USA \\
\href{mailto:jong.kim@vaughn.edu}{\tt jong.kim@vaughn.edu} \\
\end{center}

\vskip .2 in
\begin{abstract}
A redundant generating function is a generating function having
terms which are not part of the solution of the original problem.
We use redundant generating functions to study two path problems.
In the first application we explain a surprising occurrence of
Catalan numbers in counting paths that stay below the line $y=2x$.
In the second application we prove a conjecture of
Niederhausen and Sullivan.
\end{abstract}

\newtheorem{theorem}{Theorem}[section]
\newtheorem{lemma}{Lemma}[section]

\section{Introduction}
One method for solving recurrences with boundary conditions is
extending the region in which the recurrence is satisfied. Then
the generating function for the extended recurrence in the new
region may become simpler. Following MacMahon \cite[pp.~128]{MacMahon}, we
call the generating function a redundant generating function,
since it contains some terms which are not part of the solution of
the original problem. This method is especially useful in lattice
path counting problems. In this paper we first review the redundant
generating function for ballot numbers. Then we study variations of the ballot problem. In one of these variations, we explain a surprising occurrence of Catalan numbers. When we define the number $D'_2(m-n,n)$ for $m,n \ge 0$ by
    \begin{equation}\label{eq:200}
    \sum_{m, n = 0}^\infty  D'_2(m-n, n)x^{ m } t^{ n }
    =\Big( 1+ \sum_{n=0}^\infty (-1)^{n+1} C_n t^{n+1}\Big) \Big( 1-x(1+t) \Big)^{-1} ,
    \end{equation}
where $C_n$ is the $n$th Catalan number \cite[pp.~219--229]{EC2} (\seqnum{A000108}), we will see that $D'_2(m,n)$ is the number of lattice paths from $(0,0)$ to $(m,n)$ (where $0 \le n \le 2m$), with unit up $(\uparrow)$ steps
$(0,1)$ and unit right $(\rightarrow)$ steps $(1,0)$, that never
cross the line $y=2x$. There is no combinatorial significance for $D'_2(m,n)$ where $2m < n$ or $n<0$.

Next, we prove a conjecture \cite{NS} of Niederhausen and Sullivan using a redundant generating function. When we define the number $S'(m,n)$ for $m,n \ge 0$ by
   \begin{equation} \label{eq:199}
    \sum_{m, n = 0}^\infty  S'(m, n)x^{ m } t^{ n }
    =\Big( \frac{3+t-\sqrt{(1+t)^2+4t^3}}{2}\Big)\Big( 1-x(1+t+t^2+t^3) \Big)^{-1},
         \end{equation}
the conjecture says that for $2m \ge n$, $S'(m,n)$ is the number of paths
from $(0,0)$ to $( 4m-n-1, 2m-n+1)$, with up
$(\nearrow)$ steps $(1,1)$ and down $(\searrow)$ steps $(1,-1)$,
that avoid four consecutive up $(\nearrow)$ steps and never go
below the $x$-axis. There is no combinatorial meaning for $S'(m,n)$, where $ 2m < n$. Finally, we will find a redundant generating function for the numbers counting lattice paths with arbitrary given up steps, but one down step. In Theorem \ref{T:241} a redundant generating function is given that generalizes the previous two redundant generating functions (\ref{eq:200}) and (\ref{eq:199}).

\section{The ballot problem}
Let us consider the ballot problem \cite[pp.~1--8]{Mohanty} that
asks about the number $B(m,n)$ of lattice paths from $(1,0)$ to
$(m,n)$ (where $m>n$), with unit up $(\uparrow)$ steps $(0,1)$ and
unit right $(\rightarrow)$ steps $(1,0)$, that stay below the line
$x=y$. The number $B(m,n)$ can easily be computed by the
recurrence
    \begin{equation}\label{eq:201}
        B(m,n)=B(m-1,n)+B(m,n-1)\quad\text{for $0 \leq n < m$, $(m,n)\ne (1,0)$}
    \end{equation} with the
initial condition $B(1,0)=1$ and the boundary conditions
$B(m,-1)=0$ and $B(m,n)=0$ for $0\leq m \leq n$.
The recurrence $(\ref{eq:201})$ holds since a path ending at $(m,n)$ can be obtained from either a path ending at $(m-1,n)$ followed by a unit right
$(\rightarrow)$ step $(1,0)$ or a path ending at $(m,n-1)$
followed by a unit up $(\uparrow)$ step $(0,1)$.

Let $c(x)$ be the Catalan number generating function
\[ c(x)=\sum_{n=0}^\infty C_n x^n=\frac{1-\sqrt{1-4x}}{2x},\]
where $C_n=\frac1{n+1}\binom{2n}{n}$ is the $n$th Catalan number.

Now we want to find the generating function $\displaystyle \sum_{m \ge n \ge 0} B(m,n)x^m y^n$. From the well-known fact that $B(n+1,n)= C_n$,
we can use a variation of (\ref{eq:201}) which is
\begin{equation}\label{eq:202}
   B(m,n)  -  B(m-1,n) - B(m,n-1)  = \left\{
     \begin{array}{cl}
      1  & \mbox{if} \,  \,  (m,n)=(1,0) \\
      -  C_{m-1}  &  \mbox{if}  \,  \, m=n \\
        0 &  \mbox{otherwise}.  \,  \,
     \end{array}
   \right.
\end{equation}

Then the recurrence (\ref{eq:202}) is valid for all $m,n \ge 0$, and is also easy to see combinatorially.
Multiplying the recurrence (\ref{eq:202}) by $x^m y^n$ and summing on $m$ and $n$, we get
    \begin{equation*}
        (1-x-y) \sum_{m \ge n \ge 0 }B(m,n)x^m y^n=
        x - xyc(xy) .
    \end{equation*}
So, we have
\[ \sum_{m \ge n \ge 0}B(m,n)x^m y^n =\frac{x\Big(1-yc(xy) \Big)}{1-x-y}. \]
This is equivalent to
    \begin{equation}\label{eq:204}
        1+ \sum_{m \ge n \ge 0 }B(m,n)x^m y^n= \frac{1}{1-xc(xy)}.
    \end{equation}
Identity (\ref{eq:204}) can also be proved combinatorially by using the prime decomposition for Dyck paths \cite[pp.~1027--1030]{HC2}.

{ \begin{table}
 \centering
\begin{tabular}{c||rrrrrrr}
 $6$ & $-1$ & $-5$ & $-14$ & $-28$ & $-42$ & $-42$ & 0  \\
 $5$ & $-1$ & $-4$ & $-9$ & $-14$ & $-14$ & $0$ & $42$  \\
 $4$ & $-1$ & $-3$ & $-5$ & $-5$ & $0$ & $14$ & $42$  \\
 $3$ & $-1$ & $-2$ & $-2$ & $0$ & $5$ & $14$ & $28$  \\
 $2$ & $-1$ & $-1$ & $0$ & $2$ & $5$ & $9$ & $14$  \\
 $1$ & $-1$ & $0$ & $1$ & $2$ & $3$ & $4$ & $5$  \\
  $0$ & $0$ & $1$ & $1$ & $1$ & $1$ & $1$ & $1$  \\\hline \hline
  $n/m$ & $0$ & $1$ & $2$ & $3$ & $4$ & $5$ & $6$ \\
  \end{tabular}
\caption{The values of $\tilde{B}(m,n)$}\label{T:201}
\end{table} }

Instead of taking $B(m,n)=0$ for $m<n$, we could try to define
$B(m,n)$ everywhere in the first quadrant so that the recurrence (\ref{eq:201})
is satisfied as much as possible. We can do this by rewriting the
recurrence $(\ref{eq:201})$ as \[B(m-1,n)=B(m,n)-B(m,n-1) \textrm{ for } m<n, \]
with $B(m,n)=0$ for all $m<0$. So let us define $\tilde{B}(m, n)$
to be $B(m, n)$ for $m \ge n \ge 0$, and by
\begin{equation}\label{eq:2003}
\tilde{B}(m-1,n)=
\tilde{B}(m,n)-\tilde{B}(m,n-1) \textrm{ for } m<n,
\end{equation} with
$\tilde{B}(m,n)=0$ for all $m<0$. Table $\ref{T:201}$ shows the
values of $\tilde{B}(m,n)$ for the first quadrant. It is easy to
see that $\tilde{B}(m,n)$ can be extended to the whole first
quadrant so that the recurrence is satisfied everywhere except
$(1,0)$ and $(0,n)$ for $n \in \mathbb{P}=\{1,2,3, \ldots \}$. In fact, the recurrence (\ref{eq:2003}) is satisfied everywhere except at $(1,0)$ and $(0,1)$. To see this, we define $B'(m,n)$ by $B'(1,0)=1$, $B'(0,1)=-1$, $B'(m,-1)=0$, and $B'(-1,m)=0$ for all $m
\in \mathbb{N}=\{ 0,1,2,\ldots\}$ with the recurrence for all $(m,n) \in \mathbb{N}^2$, $(m,n) \neq (1,0)$ or $(m,n) \neq (0,1)$,
\[ B'(m,n)=B'(m-1,n)+B'(m,n-1).\]
Then it is easy to show by induction that $B'(m,n)=-B'(n,m)$ for all $m, n \in \mathbb{N}$, so we have that $B'(m,m)=0$ for all $m \in \mathbb{N}$.
Therefore, the values of $\tilde{B}(m,n)$ and
$B'(m,n)$ for all $(m,n)\in \mathbb{N}^2$ coincide.

From this fact, we can
assume that the recurrence for $B'(m,n)$ for the first quadrant is
\begin{equation}\label{eq:205}
   B'(m,n) - B'(m-1,n) - B'(m,n-1) = \left\{
     \begin{array}{cl}
       1  & \mbox{if} \,  \,  (m,n)=(1,0) \\
      - 1   &  \mbox{if}  \,  \, (m,n)=(0,1) \\
        0 &  \mbox{otherwise},
     \end{array}
   \right.
\end{equation}
with the boundary conditions $B'(m,n)=0$
for $m<0$ or $n<0$.

In terms of generating functions, the recurrence (\ref{eq:205})
 is equivalent to the equation
       \begin{equation*}
        (1-x-y) \sum_{m,n \geq 0} B'(m,n) x^my^n= x-y,
         \end{equation*}
 which is the same as
       \begin{equation} \label{eq:206}
         \sum_{m,n \geq 0} B'(m,n) x^my^n={x-y\over 1-x-y}.
         \end{equation}
Expanding in powers in
$x$ and $y$ and equating coefficients of $x^m y^n$ in equation
(\ref{eq:206}), we have the ballot number formula: \[
B'(m,n)={m+n-1\choose m-1}-{m+n-1\choose m}={m-n\over
m+n}{m+n\choose m}, \] which satisfies the recurrence
(\ref{eq:201}) with the initial and boundary conditions. That is,
\[ B(m,n)= B'(m,n) \textrm{ for all } 0 \le n \le m.\]
Also, we can see that the redundant generating function $(x-y)/
(1-x-y)$ for $B'(m,n)$ is much simpler than the generating
function $ x\big(1-yc(xy)\big)/(1-x-y)$ for $B(m,n)$.

\section{Variations of the ballot problem}\label{S:203}
Now let us consider \cite{Gessel-Ree} the line $x=py$, where $p \in
\mathbb{P}$, as the boundary line in the ballot problem instead of
the boundary line $x=y$. Then this problem asks about the number
$C_p(m,n)$ of lattice paths from $(1,0)$ to $(m,n)$, (where $m>p
n$) with unit up $(\uparrow)$ steps $(0,1)$ and unit right
$(\rightarrow)$ steps $(1,0)$, that never touch the line $x=py$.
Table \ref{T:202} shows the values of $C_2(m,n)$ in the case
$p=2$.
{ \begin{table}
 \centering
\begin{tabular}{c||rrrrrrrrr}
 $4$ &  &  &  &  &  &  &  &  & $0$ \\
 $3$ &  &  &  &  &  &  & $0$  & $12$ & $30$ \\
 $2$ &  &  &  &  & $0$ & $3$ & $7$  & $12$ & $18$\\
 $1$ &  &  & $0$ & $1$ & $2$ & $3$ & $4$ & $5$ & $6$\\
 $0$ & $0$ & $1$ & $1$ & $1$ & $1$ & $1$ & $1$ & $1$ & $1$ \\\hline \hline
$n/m$ & $0$ & $1$ & $2$ & $3$ & $4$ & $5$ & $6$ & $7$ & $8$ \\
 \end{tabular}
\caption{The values of $C_2(m,n)$}\label{T:202}
\end{table} }

If we extend $C_p(m,n)$ to the first quadrant then the recurrence seems to be satisfied everywhere except at $(1,0)$ and $(0,1)$. More precisely, let us define
values of $C'_p(m,n)$ by the recurrence
\[C'_p(m,n)=C'_p(m-1,n)+C'_p(m,n-1) ,\] and the initial conditions $C'_p(1,0)=1$, $C'_p(0,1)=-p$, and the boundary conditions $C'_p(m,-1)=0$ and $C'_p(-1,m)=0$ for all $m \in \mathbb{N}$.
This is equivalent to
\begin{equation}\label{eq:207}
   C'_p(m,n) - C'_p(m-1,n) - C'_p(m,n-1) = \left\{
     \begin{array}{cl}
       1  & \mbox{if} \,  \,  (m,n)=(1,0) \\
      - p   &  \mbox{if}  \,  \, (m,n)=(0,1) \\
        0 &  \mbox{otherwise} ,
     \end{array}
   \right.
   \end{equation}
with the boundary conditions $C'_p(m,n)=0$ for $m<0$ or $n<0$. The recurrence (\ref{eq:207}) is valid for all $m,n \ge 0$.

To show that $C'_p(m,n)=C_p(m,n)$ for $m \ge pn$, it is enough to show that $C'_p(pn,n)=0$ for all $n \in \mathbb{N}$. Multiplying the recurrence (\ref{eq:207}) by $x^m y^n$ and summing on $m$ and $n$, we get
       \begin{equation} \label{eq:208}
         \sum_{m,n \geq 0} C'_p(m,n) x^my^n
         ={ x-py \over 1-x-y}.
         \end{equation}
         Expanding in powers of $x$ and
$y$ and equating coefficients of $x^m y^n$ in equation
(\ref{eq:208}), we have the
 formula: for all $m,n \in \mathbb{N}$
\[ C'_p(m,n)={m+n-1\choose m-1}-p{m+n-1\choose
m}={m-p n\over m+n}{m+n\choose m}. \]
From the formula for $C'_p(m,n)$, it is easy to see that $C'_p(pn,n)=0$. Therefore the values of $C'_p(m,n)$ are the same as the values of $C_p(m,n)=0$ since $C'_p(pn,n)=0$ and $C_p(pn,n)=0$ for all $n \in \mathbb{N}$.

At this point it might be natural to ask what happens if we use
the line $y=px$ (where $p \in \mathbb{P}$) as the boundary line
instead of the line $y=x$. For convenience' sake we allow a
lattice path to touch the line $y=px$. Let $D_p(m,n)$ be the
number of lattice paths from $(0,0)$ to $(m,n)$ (where $n \le
pm$), with unit up $(\uparrow)$ steps $(0,1)$ and unit right
$(\rightarrow)$ steps $(1,0)$, that never cross the line $y=px$.
Let us consider the case $p=2$. Since the slope of the boundary line is $2$, we require $D_2(m,2m+1)=0$ and $D_2(m,2m+2)=0$ for all $m \in \mathbb{N}$. Then the number $D_2(m,n)$ can easily be computed by the recurrence
    \begin{equation*}
        D_2(m,n)=D_2(m-1,n)+D_2(m,n-1)\quad\text{for $ 0 \leq n \leq 2m $, $(m,n)\ne (0,0)$,}
    \end{equation*} with the initial condition $D_2(0,0)=1$ and the boundary conditions
    \begin{equation} \label{eq:240}
    D_2(m,2m+1)=0, D_2(m,2m+2)=0, \textrm{ and } D_2(m,-1)=0 \textrm{ for all } m \in \mathbb{N}.
     \end{equation}

{ \begin{table}
\centering
\begin{tabular}{c||rrrrrrrr}
5 &  &  & 0 & 12 & 43 &108 & 228 & 431 \\
4 &  & 0 & 3 &12 & 31 & 65 & 120 & 203 \\
3 &  & 0 & 3 & 9 & 19 & 34 & 55 & 83 \\
2 & 0 &1&3&6&10&15&21&28 \\
1 & 0 &1&2&3&4&5&6&7\\
0 & 1&1&1&1&1&1&1&1 \\\hline \hline
$n/m$ & 0 & 1 & 2 & 3 & 4 & 5 & 6 & 7 \\
\end{tabular}
\caption{The values of $D_2(m,n)$ for $ 0 \leq n \leq 2m
$}\label{T:203}
\end{table} }

Table \ref{T:203} shows the values of $D_2(m,n)$. From the table we
can observe that the numbers on the line $y=2x$ are the same as
the numbers (\seqnum{A001764}) on the line $x-1=2y$ in Table
\ref{T:202}, but the other numbers off the line $y=2x$ are different
from the numbers in Table \ref{T:202}.
We can prove that the numbers on the line $y=2x$ in Table \ref{T:203} are the same as
the numbers on the line $x-1=2y$ in Table
\ref{T:202} by finding a bijection between the set of lattice paths from $(0,0)$ to
$(pn,n)$, with unit up $(\uparrow)$ steps $(0,1)$ and unit right
$(\rightarrow)$ steps $(1,0)$, that never cross the line $x=py$
and the set of lattice paths from $(0,0)$ to $(n,pn)$, with unit
up $(\uparrow)$ steps $(0,1)$ and unit right $(\rightarrow)$ steps
$(1,0)$, that never cross the line $y=px$.

Now let us present the bijection. When we are given a
lattice path $P$ from $(0,0)$ to $(pn,n)$, first reverse it to
obtain a path $P^r$ from $(pn,n)$ to $(0,0)$ consisting of down
$(\downarrow)$ steps and left $(\leftarrow)$ steps and change a
down $(\downarrow)$ step with a right $(\rightarrow)$ step and a
left $(\leftarrow)$ step with a up $(\uparrow)$ step. Then we
have a lattice path $P'$ from $(0,0)$ to $(n,pn)$. In the same
way, we can transform the lattice path $P'$ from $(0,0)$ to
$(n,pn)$ into the lattice path $P$ from $(0,0)$ to $(pn,n)$. Since
the path $P$ does not cross the line $y=px$, we know that for each $i$,
 the number of left steps among the first $i$ steps in the path $P^r$ is always
less than or equal to $p$ times the number of down steps among
the first $i$ steps. So the corresponding path $P'$
to the path $P$ is a path that never crosses the line $y=px$,
and vice versa. Therefore we have
\[D_p(n,pn)=C_p(pn+1,n) \textrm{ for } n \ge 0 \textrm{ and }p \ge 1 .\]

From the path transformation, we can also derive
the number of lattice paths starting at a point that is below the
line $y=px$ and ending on the line $y=px$, with unit up
$(\uparrow)$ steps $(0,1)$ and unit right $(\rightarrow)$ steps
$(1,0)$, that never cross the line $y=px$. That is, the number
of lattice paths from $(i,j)$ to $(n,pn)$ (where $j \leq pi$)
having $n-i$ unit right $(\rightarrow)$ steps and $pn-j$ unit up
$(\uparrow)$ steps, that never cross the line $y=px$ is
$C_p\big(pn-j+1,n-i\big)$.

\subsection{Another way to get Catalan numbers}
If we compute, as before, the values of $D_2(m,n)$ using the
recurrence
\begin{equation} \label{eq:241}
D_2(m-1,n)=D_2(m,n)-D_2(m,n-1)
\end{equation} with
the boundary conditions $(\ref{eq:240})$ extended to the first quadrant, then we do not find anything interesting. So let us extend these values of $D_2(m,n)$
satisfying the recurrence $(\ref{eq:241})$ to the region $\{ \,(x,y) \in
\mathbb{Z}^2 \mid y \geq -x,y\geq 0\,\}$. There is no combinatorial significance for $D_2(m, n)$ where $n > 2m$.

Table \ref{T:204} shows the values of $D_2(m,n)$. Note that the numbers on the line $x=-2$ in Table \ref{T:204} are $(-1)^{n}M_n$, where $M_n$ is the $n$th Motzkin number
(\seqnum{A001006}). Surprisingly, we find that the number
$D_2(-n-1,n+1)$ is equal to $(-1)^{n+1} C_{n}$ for $n \in
\mathbb{N}$ on the boundary line $y=-x$ in the table.

{ \begin{table}
 \centering
\begin{tabular}{c||rrrrrrrrrrr}
 $6$ & $42$ & $28$ & $19$ & $13$ & $9$ & $6$ & $3$ & $0$ & $0$ & $12$ & $55$ \\
 $5$ & & $-14$ & $-9$ & $-6$ & $-4$ & $-3$ & $-3$ & $-3$ & $0$ & $12$ & $43$ \\
 $4$ & &  & $5$ & $3$ & $2$ & $1$ & $0$ & $0$ & $3$ & $12$ & $31$ \\
 $3$ & &  &  & $-2$ & $-1$ & $-1$ & $-1$ & $0$ & $3$ & $9$ & $19$ \\
 $2$ & &  &  &  & $1$ & $0$ & $0$ & $1$ & $3$ & $6$ & $10$ \\
 $1$ & &  &  &  &  & $-1$ & $0$ & $1$ & $2$ & $3$ & $4$ \\
 $0$ & &  &  &  &  &  & $1$ & $1$ & $1$ & $1$ & $1$ \\\hline \hline
 $n/m$ & $-6$ & $-5$ & $-4$ & $-3$ & $-2$ & $-1$ & $0$ & $1$ & $2$ & $3$ & $4$ \\
  \end{tabular}
\caption{The values of $D_2(m,n)$}\label{T:204}
\end{table} }

Before we find the redundant generating function for
$D_2(m,n)$ (We will give it after Theorem \ref{T:243}) let us prove
that we have the Catalan numbers on the boundary line $y=-x$ in
Table \ref{T:204}. Since $D_2(n,2n)=C_2(2n+1,n)$ for all $n \in \mathbb{N}$, we have
\[D_2(n,2n)={1 \over 2n+1}\binom {3n}n.\]
So, by the recurrence $(\ref{eq:241})$ with $D_2(n,2n+1)=0$, we have
\[D_2(n-1,2n+1) =- {1 \over (2n+1)}\binom {3n}n \textrm{ for all }n \in \mathbb{N} .\]

For each $n \in \mathbb{N}$, let us define $a_n$ to be $D_2(n-1,2n+1)$ and
$b_n$ to be $D_2(-n-1,n+1)$. Now let us find a
formula for $b_n$ in terms of $a_i$ in Table \ref{T:204}. As shown
in Figure \ref{F:201}, we can see \[D_2(m-1,n)=D_2(m,n)-D_2(m,n-1).\] By
iterating this recurrence, we can easily derive that
$b_n=\sum_{i=0}^\infty (-1)^{n-2i}c_{ni}a_i$, where $c_{ni}$ is
the number of paths from $(i-1,2i+1)$ to $(-n-1,n+1)$ with steps
$(-1,0)$ and $(-1,1)$. That is,
\[c_{ni}= \binom
{3i+n-2i}{3i}=\binom {n+i}{3i}.\]

The formula $b_{n} = (-1)^{n+1} C_{n}$ is a consequence of the following lemma. \vspace{.2in}

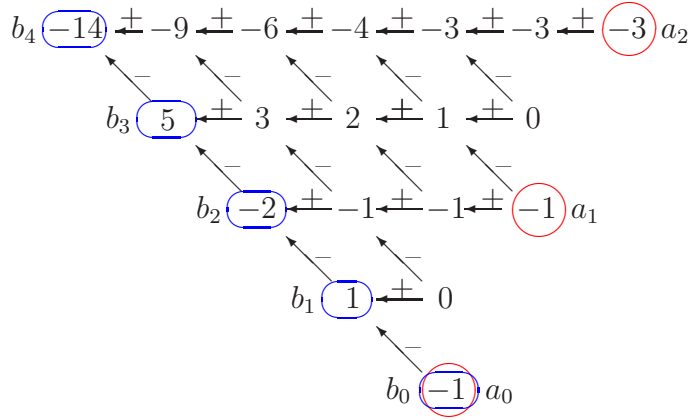
\begin{figure}[h]
\begin{center}
\setlength{\unitlength}{6mm}
\begin{picture}(12,8)
\put(-.4,8){$-14$}\put(1.85,8){$-9$}\put(3.85,8){$-6$}\put(5.85,8){$-4$}
\put(7.85,8){$-3$}\put(9.85,8){$-3$}\put(12,8){$-3$}
\put(2.1,5.95){$5$}\put(4.2,6){$3$}\put(6.2,6){$2$}
\put(8.2,6){$1$}\put(10.2,6){$0$} \put(3.8,4){$-2$}\put(6,4){$-1$}
\put(8,4){$-1$}\put(10,4){$-1$} \put(6.2,2){$1$}
\put(8.25,2){$0$}\put(8,0){$-1$}
\put(11.7,8.14){\vector(-1,0){.8}}
\put(9.7,8.14){\vector(-1,0){.8}}\put(9.9,6.6){\vector(-1,1){1}}
\put(7.7,8.14){\vector(-1,0){.8}}\put(7.9,6.6){\vector(-1,1){1}}
\put(5.7,8.14){\vector(-1,0){.8}}\put(5.9,6.6){\vector(-1,1){1}}
\put(3.7,8.14){\vector(-1,0){.8}}\put(3.9,6.6){\vector(-1,1){1}}
\put(1.7,8.14){\vector(-1,0){.6}}\put(1.9,6.6){\vector(-1,1){1}}
\put(9.9,6.2){\vector(-1,0){1}}\put(9.9,4.6){\vector(-1,1){1}}
\put(7.9,6.2){\vector(-1,0){1}}\put(7.9,4.6){\vector(-1,1){1}}
\put(5.9,6.2){\vector(-1,0){1}}\put(5.9,4.6){\vector(-1,1){1}}
\put(3.9,6.2){\vector(-1,0){1}}\put(3.9,4.6){\vector(-1,1){1}}
\put(9.65,4.2){\vector(-1,0){.8}}\put(5.2,6.25){+}
\put(7.9,4.2){\vector(-1,0){1}}\put(7.9,2.6){\vector(-1,1){1}}
\put(5.9,4.2){\vector(-1,0){1}}\put(5.9,2.6){\vector(-1,1){1}}
\put(7.9,2.2){\vector(-1,0){1}}\put(7.9,0.6){\vector(-1,1){1}}
\put(7.2,2.25){+}\put(7.2,4.25){+}\put(7.2,6.25){+}
\put(5.2,4.25){+}\put(3.2,6.25){+}\put(1.2,8.25){+}
\put(3.2,8.25){+}\put(5.2,8.25){+}\put(7.2,8.25){+}
\put(9.2,8.25){+}\put(11.2,8.25){+}
\put(9.2,6.25){+}\put(9.2,4.25){+}\put(7.2,6.25){+}
\put(7.54,.95){--}\put(7.54,2.95){--}\put(7.54,4.95){--}\put(7.54,6.95){--}
\put(5.54,2.95){--}\put(5.54,4.95){--}\put(5.54,6.95){--}
\put(3.54,4.95){--}\put(3.54,6.95){--}\put(1.54,6.95){--}
\put(9.54,4.95){--}\put(9.54,6.95){--}
\put(12.5,8.19){\textcolor{red}{\circle{1.1}}}\put(10.5,4.19){\textcolor{red}{\circle{1.1}}}
\put(8.5,.19){\textcolor{red}{\circle{1.1}}}
\put(0.2,8.19){\textcolor{blue}{\oval(1.4,.8)}}
\put(2.24,6.19){\textcolor{blue}{\oval(1.3,.8)}}
\put(4.24,4.19){\textcolor{blue}{\oval(1.3,.8)}}
\put(6.24,2.19){\textcolor{blue}{\oval(1.1,.8)}}
\put(8.5,0.19){\textcolor{blue}{\oval(1.3,.8)}}
\put(9.3,0.0){$a_0$}\put(7.1,0.0){$b_0$}\put(5,2.0){$b_1$}\put(2.9,4){$b_2$}
\put(-1.2,8){$b_4$}\put(.9,6){$b_3$}\put(13.2,8){$a_2$}\put(11.2,4){$a_1$}
\end{picture}
\end{center}
\caption{The values of $D_2(m,n)$ }\label{F:201}
\end{figure}

\begin{lemma}\label{le:02} For $n \ge 0$,
$$\sum_{i=0}^\infty \binom {n+i}{3i}
\frac{1}{2i+1}\binom{3i}{i} =\frac{1}{n+1}\binom{2n}{n}.$$
\end{lemma}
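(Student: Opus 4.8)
The plan is to prove the identity by the generating-function method: package the left-hand side into an ordinary generating function in a new variable $x$, evaluate it in closed form, and then recognize the result as the Catalan generating function $c(x)$ by showing both satisfy the same functional equation. Throughout I abbreviate the summand factor $A_i := \frac{1}{2i+1}\binom{3i}{i}$, the Fuss--Catalan numbers (sequence \seqnum{A001764}, the entries lying on the line $x-1=2y$ in Table \ref{T:202}), whose generating function $f(u):=\sum_{i\ge0}A_i u^i$ satisfies the cubic functional equation $f = 1 + u f^3$.

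First I would multiply the left-hand side by $x^n$, sum over $n\ge0$, and interchange the order of summation to isolate $A_i$. Since $\binom{n+i}{3i}$ vanishes unless $n\ge 2i$, I write $n = 2i+k$ with $k\ge0$, use $\binom{n+i}{3i}=\binom{3i+k}{k}$, and apply the negative-binomial identity $\sum_{k\ge0}\binom{3i+k}{k}x^k=(1-x)^{-(3i+1)}$. The inner sum then collapses, giving
$$\sum_{n\ge0}\Big(\sum_{i\ge0}\binom{n+i}{3i}A_i\Big)x^n = \frac{1}{1-x}\,f\!\left(\frac{x^2}{(1-x)^3}\right).$$
Next, I set $g(x) := \frac{1}{1-x}\, f\!\big(x^2/(1-x)^3\big)$ and substitute $u = x^2/(1-x)^3$ into $f=1+uf^3$. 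Since $g^3 = f^3/(1-x)^3$, the factor $(1-x)^3$ in $u$ cancels cleanly against the denominator, and I obtain the simple relation
$$(1-x)\,g = 1 + x^2 g^3.$$

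On the other side, I would verify that the Catalan generating function $c(x)$ satisfies this very same equation. Starting from $c = 1 + x c^2$, so that $c-1 = xc^2$, I compute
$$c - xc = (1+xc^2) - xc = 1 + xc(c-1) = 1 + x^2 c^3,$$
which is exactly $(1-x)c = 1 + x^2 c^3$. Finally, since $g(0)=c(0)=1$ and the relation $(1-x)h = 1 + x^2 h^3$ determines each coefficient of a formal power series $h$ uniquely --- the factor $x^2$ forces the coefficient of $x^n$ on the right to involve only strictly lower coefficients, matched against the $h_n - h_{n-1}$ produced by $(1-x)h$ on the left --- I conclude $g=c$. Equating coefficients of $x^n$ then yields $\sum_{i\ge0}\binom{n+i}{3i}A_i = C_n$, as claimed.

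The main obstacle --- really the one genuinely clever step --- is recognizing that after the negative-binomial collapse the substitution $u = x^2/(1-x)^3$ is precisely the one that turns the cubic equation $f = 1 + uf^3$ into the clean relation $(1-x)g = 1 + x^2 g^3$, and then noticing that the ordinary Catalan generating function happens to satisfy that same functional equation. Everything else --- the summation interchange, the negative-binomial evaluation, and the uniqueness of the formal-power-series solution --- I expect to be routine.
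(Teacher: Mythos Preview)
Your proof is correct but follows a genuinely different route from the paper's. The paper works at fixed $n$: it cancels the $(3i)!$ factors in the summand to obtain $\dfrac{(n+i)!}{(n-2i)!\,i!\,(2i+1)!}$, rewrites this as $\dfrac{1}{n+1}\binom{n+i}{n}\binom{n+1}{n-2i}$, and then recognizes the sum over $i$ as the coefficient of $x^n$ in $(1+x)^{n+1}(1-x^2)^{-(n+1)}$; since $\dfrac{1+x}{1-x^2}=\dfrac{1}{1-x}$, this equals the coefficient of $x^n$ in $(1-x)^{-(n+1)}$, namely $\binom{2n}{n}$. Your approach instead sums over all $n$ at once, collapses the inner sum via the negative binomial series, substitutes $u=x^{2}/(1-x)^{3}$ into the Fuss--Catalan equation $f=1+uf^{3}$, and identifies the result with the Catalan generating function through the shared relation $(1-x)h=1+x^{2}h^{3}$ and uniqueness. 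The paper's argument is shorter and entirely self-contained, requiring no functional equation for the Fuss--Catalan numbers; your argument is more in keeping with the generating-function machinery that drives the rest of the paper and would generalize more readily to analogous identities with higher Fuss--Catalan numbers.
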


\begin{proof} Since the binomial coefficient $\binom{n+i}{3i}$ is nonzero only for $0 \leq i \leq \left\lfloor n/2 \right\rfloor$, the summation $i$ on the left is in that range. So, for $n \ge 0$
{\allowdisplaybreaks
\begin{align*}
\sum_{i=0}^\infty \binom {n+i}{3i}
\frac{1}{2i+1}\binom{3i}{i} &=\sum_{i=0}^{\left\lfloor n/2 \right\rfloor}
\frac{(n+i)!}{(3i)!\, (n-2i)!} \cdot \frac{1}{2i+1} \cdot \frac{(3i)!}{i!\, (2i)!}\\
&=\sum_{i=0}^{\left\lfloor n/2 \right\rfloor} \frac{(n+i)!}{(n-2i)! \, i!\, (2i+1)!}\\
&= \sum_{i=0}^{\left\lfloor n/2 \right\rfloor} \frac{(n+i)!}{(n-2i)! \, i!}\cdot \frac{n!}{n!\, (2i+1)!}\\
&= \frac{1}{n+1}\sum_{i=0}^\infty \binom{n+i}{n}
\binom{n+1}{n-2i}\\
&=\frac{1}{n+1}\binom{2n}{n},
\end{align*}
}
where the second last equality is only valid for $0 \leq i \leq \left\lfloor n/2 \right\rfloor$ and the last equality is derived from the fact that the coefficient of $x^n$ in $\big((1+x)/(1-x^2)\big)^{n+1}$ is the
same as the coefficient of $x^n$ in $1/(1-x)^{n+1}$.
\end{proof}
Note that in the paper \cite{Sun} Sun proved Lemma \ref{le:02} combinatorially by using binary and ternary trees. Another proof of Lemma \ref{le:02} is in the paper \cite{MS}.

So, we have the following result.
%
\begin{theorem}\label{T:221}
For each $n \in \mathbb{N}$, the number $D_2(-n-1,n+1)$ is equal to $(-1)^{n+1} C_{n}$, where $C_n$ is the $n$th
Catalan number.
\end{theorem}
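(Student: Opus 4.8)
The plan is to assemble the theorem directly from the three ingredients already prepared in this subsection: the expansion of $b_n$ in terms of the $a_i$, the closed form for the $a_i$, and Lemma \ref{le:02}. First I would recall that, with $b_n = D_2(-n-1, n+1)$ and $a_i = D_2(i-1, 2i+1)$, iterating the recurrence (\ref{eq:241}) gave
\[ b_n = \sum_{i=0}^\infty (-1)^{n-2i}\, c_{ni}\, a_i, \qquad c_{ni} = \binom{n+i}{3i}, \]
where the sum is genuinely finite because $\binom{n+i}{3i}$ vanishes unless $0 \le i \le \lfloor n/2 \rfloor$. Since the exponent $n-2i$ differs from $n$ by an even integer, the sign $(-1)^{n-2i}$ equals $(-1)^n$ for every $i$, so it factors out of the sum uniformly.

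Next I would substitute the earlier evaluation $a_i = -\tfrac{1}{2i+1}\binom{3i}{i}$ and collect signs:
\[ b_n = (-1)^n \sum_{i=0}^\infty \binom{n+i}{3i}\left(-\frac{1}{2i+1}\binom{3i}{i}\right) = (-1)^{n+1} \sum_{i=0}^\infty \binom{n+i}{3i}\frac{1}{2i+1}\binom{3i}{i}. \]
At this point Lemma \ref{le:02} applies verbatim: it identifies the remaining sum as $\tfrac{1}{n+1}\binom{2n}{n} = C_n$. Combining the two displays gives $b_n = (-1)^{n+1} C_n$, which is exactly the assertion $D_2(-n-1, n+1) = (-1)^{n+1} C_n$.

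The substantive work has all been discharged beforehand — the transfer-matrix-style expansion of $b_n$ obtained by repeatedly applying $D_2(m-1,n) = D_2(m,n) - D_2(m,n-1)$, the identification of $a_i$ via $D_2(i,2i) = C_2(2i+1,i)$ together with the boundary condition $D_2(i, 2i+1) = 0$, and above all the binomial identity of Lemma \ref{le:02}. Consequently I expect the only point requiring care in this theorem to be the sign bookkeeping: verifying that $(-1)^{n-2i} = (-1)^n$ so that the lone minus sign carried by each $a_i$ produces the overall factor $(-1)^{n+1}$. I anticipate no real obstacle beyond this check, since everything else reduces to quoting results established earlier in the section.
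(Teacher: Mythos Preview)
Your proposal is correct and follows exactly the paper's own argument: the paper likewise derives $b_n = \sum_i (-1)^{n-2i}\binom{n+i}{3i}a_i$ from iterating the recurrence, plugs in $a_i = -\tfrac{1}{2i+1}\binom{3i}{i}$, and then invokes Lemma~\ref{le:02} to conclude. The only thing you have added is making the sign bookkeeping $(-1)^{n-2i}=(-1)^n$ explicit, which the paper leaves to the reader.
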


Note that there is no combinatorial significance for $D_2(-n-1,n+1)$ for $n \ge 0$. We will give another proof of Theorem \ref{T:221} later.

\section{A conjecture of Niederhausen and Sullivan}
A generalized Dyck path is a path starting at $(0,0)$, with up $(\nearrow)$
steps $(1,1)$ and down $(\searrow)$ steps $(1,-1)$, that never goes below the $x$-axis. Now let us count the number of generalized Dyck paths without four
consecutive up $(\nearrow)$ steps. Define $S(m,n)$ to be the
number of such generalized Dyck paths from $(0,0)$ to $(m,n)$ that end with a down $(\searrow)$ step $(1,-1)$.
Since a path starts at $(0,0)$, it is natural to set the initial
condition $S(0,0)=1$. It is obvious that $S(2,0)=1,
S(3,1)=1, \textrm{ and } S(4,2)=1$. But we have that $S(5,3)=0$
because we don't allow four consecutive up steps. Since a path cannot have four consecutive up $(\nearrow)$ steps, such a generalized Dyck path ending at $(m,n)$ can be obtained uniquely from a generalized Dyck path ending at $(m-i-1,n-i+1)$ followed by $i$ consecutive up $(\nearrow)$ steps for $0\le i \le 3$ and a down
$(\searrow)$ step. So, when we define $S(m,n)=0$ for $n<0$ we have the recurrence relation
   \begin{equation*}
    S(m,n)=\sum_{i=-1}^2 S(m-i-2,n-i) \textrm{ for } m,n \in \mathbb{N}, (m,n)\neq(0,0).
   \end{equation*}
Table \ref{T:206} shows the values of $S(m,n)$ with the $0$'s omitted.

{ \begin{table}
 \centering
\begin{tabular}{c||rrrrrrrrrrrrrr}
  $6$ &  &  &   &   &   &   &   &   &   &  &  &  & $1$ &      \\
  $5$ &  &  &   &   &   &   &   &   &   &  &  & $3$ &  &  $19$  \\
  $4$ &  &  &   &   &   &   &   &   & $1$  &  & $6$ &  & $28$ &      \\
  $3$ &  &  &   &   &   &   &   & $2$ &   & $9$ &  & $33$ &  & $116$     \\
  $2$ &  &  &   &   & $1$ &   & $3$ &  & $10$  &    & $32$ &  & $101$ &    \\
  $1$ &  &  &   & $1$ &   & $3$ &   & $8$ &    & $23$ &  & $68$  &  & $205$ \\
  $0$ & $1$ &   & $1$ &   & $2$ &   & $5$ &   & $13$ &  & $36$ &   & $104$  \\\hline  \hline
  $n/m$  & $0$ & $1$ & $2$ & $3$ & $4$ & $5$ & $6$ & $7$ & $8$ & $9$ & $10$ & $11$ & $12$ & $13$ \\
  \end{tabular}
\caption{The values of $S(m,n)$ }\label{T:206}
\end{table} }

{ \begin{table}
 \centering
\begin{tabular}{c||rrrrrrr}
  $6$  & $2$ & $1$ & $0$ & $5$ & $32$ & $112$ & $297$ \\
  $5$  & $-1$ & $-1$ & $0$ & $8$ & $33$ & $90$ & $200$ \\
  $4$  & $1$ & $0$ & $2$ & $10$ & $28$ & $61$ & $115$  \\
  $3$  & $-1$ & $0$ & $3$ & $9$ & $19$ & $34$ & $55$ \\
  $2$  & $0$ & $1$ & $3$ & $6$ & $10$ & $15$ & $21$  \\
  $1$  & $0$ & $1$ & $2$ & $3$ & $4$ & $5$ & $6$ \\
  $0$  & $1$ & $1$ & $1$ & $1$ & $1$ & $1$ & $1$  \\
  \hline  \hline
  $n/m$  & $0$ & $1$ & $2$ & $3$ & $4$ & $5$ & $6$   \\
  \end{tabular}
\caption{The values of $S'(m,n)$ }\label{T:207}
\end{table} }

In the paper \cite{NS}, Niederhausen and Sullivan conjectured that the number $S'(m,n)$ for $m,n \in \mathbb{N}$ defined by
   \begin{equation*}
    \sum_{m, n = 0}^\infty  S'(m, n)x^{ m } t^{ n }
    =\Big( \frac{3+t-\sqrt{(1+t)^2+4t^3}}{2}\Big)\Big( 1-x(1+t+t^2+t^3) \Big)^{-1}
         \end{equation*}
is equal to $S(4m-n,2m-n)$ in the case $2m \ge n$. That is, in the case $2m \ge n \ge 0$, $S'(m,n)$ is the number of generalized Dyck paths from $(0,0)$ to $(4m-n,2m-n)$ without four consecutive up steps that end with a down $(\searrow)$ step $(1,-1)$. Note that there is no combinatorial meaning for $S'(m,n)$ when $2m < n$.
Table \ref{T:207} shows the values of $S'(m,n)$.

\section{Main Theorem}
Now we will prove a generalization of the Niederhausen and Sullivan conjecture involving more general paths. To do this, let us define a step set $T$ to be a subset of the set $\mathbb{N}\cup \{ -1\}$. We assume that the step set $T$ has $-1$ as an element and that $K$ is the largest element of $T$. Now let us consider a path from $(0,0)$ to
$(m,0)$ (where $m \in \mathbb{N}$), with steps $(1,i)$ (where $i
\in T $), that never goes below the $x$-axis. Put a weight of $c_i
x$ on each step $(1,i)$, where $c_i$ is an arbitrary weight for
all $i \in T $ except $c_{-1}=1$ and $c_{K}=1$, and let $f(x)$ be
the generating function for weighted paths from $(0,0)$ to
$(m,0)$, with steps $(1,i)$ (where $i \in T $), that never go
below the $x$-axis.

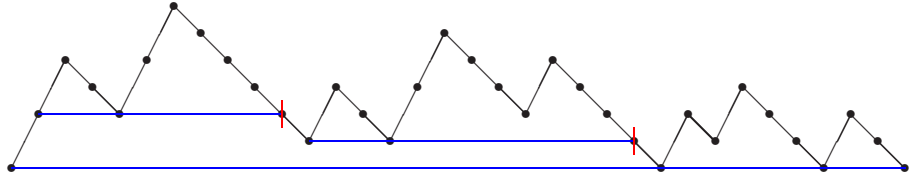
\begin{figure}[h]
\begin{center}
\setlength{\unitlength}{3.6mm}
\begin{picture}(33,6)
\put(0,0){\line(1,2){2}}\put(2,4){\line(1,-1){2}}\put(4,2){\line(1,2){2}}\put(6,6){\line(1,-1){5}}
\put(11,1){\line(1,2){1}}\put(12,3){\line(1,-1){2}}\put(14,1){\line(1,2){2}}
\put(16,5){\line(1,-1){3}}\put(19,2){\line(1,2){1}}
\put(20,4){\line(1,-1){4}}\put(24,0){\line(1,2){1}}
\put(25,2){\line(1,-1){1}}\put(26,1){\line(1,2){1}}
\put(27,3){\line(1,-1){3}}\put(30,0){\line(1,2){1}}
\put(31,2){\line(1,-1){2}}\put(25,2){\line(1,-1){1}}
\put(0,0){\circle*{0.3}}\put(1,2){\circle*{0.3}}
\put(2,4){\circle*{0.3}}\put(3,3){\circle*{0.3}}
\put(4,2){\circle*{0.3}}\put(5,4){\circle*{0.3}}
\put(6,6){\circle*{0.3}}\put(7,5){\circle*{0.3}}
\put(8,4){\circle*{0.3}}\put(9,3){\circle*{0.3}}
\put(10,2){\circle*{0.3}}\put(11,1){\circle*{0.3}}
\put(12,3){\circle*{0.3}}\put(13,2){\circle*{0.3}}
\put(14,1){\circle*{0.3}}\put(15,3){\circle*{0.3}}
\put(16,5){\circle*{0.3}}\put(17,4){\circle*{0.3}}
\put(18,3){\circle*{0.3}}\put(19,2){\circle*{0.3}}
\put(20,4){\circle*{0.3}}\put(21,3){\circle*{0.3}}
\put(22,2){\circle*{0.3}}\put(23,1){\circle*{0.3}}
\put(24,0){\circle*{0.3}}\put(25,2){\circle*{0.3}}
\put(26,1){\circle*{0.3}}\put(27,3){\circle*{0.3}}
\put(28,2){\circle*{0.3}}\put(29,1){\circle*{0.3}}
\put(30,0){\circle*{0.3}}\put(31,2){\circle*{0.3}}
\put(32,1){\circle*{0.3}}\put(33,0){\circle*{0.3}}
\put(1,2){\textcolor{blue}{\line(1,0){9}}}
\put(11,1){\textcolor{blue}{\line(1,0){12}}}
\put(0,0){\textcolor{blue}{\line(1,0){33}}}
\put(10,1.5){\textcolor{red}{\line(0,1){1}}}
\put(23,.5){\textcolor{red}{\line(0,1){1}}}
\end{picture}
\end{center}
\caption{ Decomposition for a path from $(0,0)$ to $(m,0)$ when
$T =\{-1,2\}$ }\label{F:202}
\end{figure}

As shown in Figure \ref{F:202}, any such path starting
with a step $(1,i)$, where $i \ge 0$, can be decomposed uniquely
\cite[pp.~1027--1030]{HC2} by cutting it right before the first
points at height $j$ for $j=i-1, i-2, \ldots ,0$. This gives the
functional equation for $f$:
        \begin{equation} \label{eq:214}
         f(x)= \sum_{ i \in T }c_i \big(xf(x)\big)^{i+1},
         \end{equation} where the term for $i=-1$ corresponds to the empty path.

Now let us consider a path $Q$ from $(0,h)$ to
$(m,0)$ (where $h, m \in \mathbb{N}$), with steps $(1,i)$ (where $i
\in T $), that never goes below the $x$-axis. Decomposing the
path $Q$, as before, into the first time it reaches height $i$ for
$i=h-1,h-2, \ldots ,0$, we can deduce that the
generating function for such weighted paths from $(0,h)$ to
$(m,0)$ is $x^hf(x)^{h+1}$.

For $m, n \ge 0$ and $h \ge 0$, we define $P_h(m,n)$ to be the sum of the weights of lattice paths from $(0,h)$ to $(m,n)$, with steps $(1,i)$ where $i \in T $,
that never go below the $x$-axis. Then we have the initial condition
$P_h(0,h)=1$ and $P_h(m,n)=0$ for $n <0 $ because paths cannot go below the $x$-axis. Since such paths have steps $(1,i)$ (where $i \in
         T$) we have the recurrence
        \begin{equation} \label{eq:215}
         P_h(m+1,n)=  \sum_{ i \in T } c_i P_h(m,n-i) \textrm{ for } m,n \ge 0.
         \end{equation}
Then we can easily deduce
\[x^hf(x)^{h+1} =\sum_{m=0}^\infty P_h(m,0)x^m \text{ and } f(x)=
\sum_{m=0}^\infty P_0(m,0)x^m. \]
Also, we find the generating function for $P_h(m,n)$:
\begin{equation} \label{eq:248}
 \sum_{m,n=0}^\infty P_h(m,n) x^m t^n=\frac{t^h}{1-\sum_{i \in T}(c_i t^i x)}-\frac{\big(xf(x)\big)^{h+1} t^{-1}}{1-\sum_{i \in T}(c_i t^ix)},
 \end{equation}
where the first term on the right side in $(\ref{eq:248})$ counts the sum of the weights of all lattice paths start at $(0,h)$, and the second term counts the sum of the weights of all lattice paths start at $(0,h)$ and go below the $x$-axis.

Now we want to extend the region of definition of $P_h(m,n)$ to all integer values of $n$. That is, for $m \ge 0$ and $n<0$, we want the recurrence (\ref{eq:215}) to
hold below the $x$-axis. So, for $m \ge 0$ let us define $P'_h(m,n)$ to be
$P_h(m,n)$ for $n\geq -K$, and by
        \begin{equation} \label{eq:216}
            P'_h(m,n)=P'_h(m+1,n+K) - \sum_{  i \in T - \{K\} }
            c_i P'_h(m,n+K-i) \quad \text{for $n<-K$}.
        \end{equation}
Note that $P'_h(m,n)$ in (\ref{eq:216}) holds for all $m \in \mathbb{N}$ and $n \in \mathbb{Z}$.
{ \begin{table}
 \centering
\begin{tabular}{c||rrrrrrrr}
  $4$  & $0$ & $0$ & $2$ & $4$ & $12$ & $32$ & $72$ & $194$ \\
  $3$  & $0$ & $1$ & $1$ & $6$ & $9$ & $27$ & $60$ & $137$ \\
  $2$  & $0$ & $1$ & $2$ & $3$ & $10$ & $16$ & $44$ & $93$ \\
  $1$  & $1$ & $0$ & $2$ & $2$ & $5$ & $12$ & $21$ & $56$  \\
  $0$  & $0$ & $1$ & $0$ & $2$ & $2$ & $5$ & $12$ & $21$ \\
  $-1$  & $0$ & $0$ & $0$ & $0$ & $0$ & $0$ & $0$ & $0$  \\
  $-2$  & $0$ & $0$ & $0$ & $0$ & $0$ & $0$ & $0$ & $0$  \\
  $-3$  & $0$ & $-1$ & $0$ & $-2$ & $-2$ & $-5$ & $-12$ & $-21$  \\
  $-4$  & $0$ & $1$ & $0$ & $2$ & $2$ & $5$ & $12$ & $21$ \\
  $-5$  & $-1$ & $-1$ & $-2$ & $-4$ & $-7$ & $-17$ & $-33$ & $-77$  \\
  $-6$  & $2$ & $2$ & $4$ & $8$ & $14$ & $34$ & $66$ & $154$ \\
  $-7$  & $-3$ & $-5$ & $-8$ & $-17$ & $-33$ & $-72$ & $-155$ & $-345$  \\
  $-8$  & $6$ & $10$ & $18$ & $35$ & $74$ & $155$ & $342$ & $762$ \\
  $-9$  & $-13$ & $-20$ & $-39$ & $-76$ & $-160$ & $-344$ & $-753$ & $-1696$  \\
  $-10$  & $26$ & $43$ & $82$ & $167$ & $348$ & $758$ & $1670$ & $3759$ \\
   \hline  \hline
  $n/m$  & $0$ & $1$ & $2$ & $3$ & $4$ & $5$ & $6$ & $7$   \\
  \end{tabular}
\caption{The values of $P'_1(m,n)$ with $T=\{-1, 1, 2\}$ }\label{T:209}
\end{table} }

Table \ref{T:209} shows the values of $P'_1(m,n)$ with $T=\{-1, 1, 2\}$ and $c_i=1$ for all $i \in T$. In this table once we know the values of $P'_1(0,n)$ in the first column, we can figure out the values of $P'_1(m,n)$ for $m>0$ by the recurrence that corresponds to the step set $T=\{-1, 1, 2\}$. For example, $P'_1(1,n)$ can be computed by summing $P'_1(0,n-i)$ for all $i \in T$. Similarly,  $P'_1(m+1,n)$ can be computed by summing $P'_1(m,n-i)$ for all $i \in T$ and $m \ge 0$. So, the values of $P'_h(m,n)$ are determined by the values of $P'_h(0,n)$. Therefore, it is important to find out the generating function for $P'_h(0,n)$ to know the values of $P'_1(m,n)$.

In the case $T =\{ -1, 0, 1, 2\}$ with $c_i=1$ for all $i \in T $, we will prove $P'_0(m,n)=S(2m+n,n)$ (where $m,n \in \mathbb{N}$) which is related to the Niederhausen and Sullivan conjecture. Now we present a direct way to see the relation between the numbers $S(m,n)$ and $P'_0(m,n)$ by giving a bijection between the set $A$ of generalized Dyck paths from $(0,0)$ to $(2m+n,n)$ (where $n \ge 0$) with no four consecutive up steps that end a down step and the set $B$ of paths from $(0,0)$ to $(m,n)$ with steps $(1,-1)$, $(1,0)$, $(1,1)$, and $(1,2)$ that never go below the $x$-axis. Then the number $P'_0(m,n)$ becomes the cardinality of the set $B$. Then the following lemma gives us the bijection between the sets $A$ and $B$.

\begin{lemma}
For $m,n \in \mathbb{N}$, the number $S(2m+n,n)$ is  equal to the number $P'_0(m,n)$.
\end{lemma}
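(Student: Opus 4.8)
The plan is to prove the equality by exhibiting an explicit bijection $\phi$ between the set $A$ of generalized Dyck paths from $(0,0)$ to $(2m+n,n)$ with no four consecutive up steps that end with a down step, and the set $B$ of paths from $(0,0)$ to $(m,n)$ with steps in $\{(1,-1),(1,0),(1,1),(1,2)\}$ that never go below the $x$-axis. Since every weight $c_i$ equals $1$ in the case $T=\{-1,0,1,2\}$, we have $|A|=S(2m+n,n)$ and $|B|=P_0(m,n)=P'_0(m,n)$, so a bijection $A\to B$ is exactly the assertion $S(2m+n,n)=P'_0(m,n)$. The map I would use reads a path in $A$ from left to right and cuts it immediately after each down step, partitioning it into consecutive blocks, each consisting of $i$ up steps $\nearrow$ followed by a single down step $\searrow$. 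The block with $i$ up steps is then sent to the step $(1,i-1)$, so that the block types $i=0,1,2,3$ become the four allowed steps $(1,-1),(1,0),(1,1),(1,2)$. The inverse map replaces each step $(1,j)$ of a $B$-path by a block of $j+1$ up steps followed by one down step.

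First I would check that the block decomposition is well defined and unique and that $\phi$ respects endpoints. Because the path ends with a down step, every up step belongs to exactly one block, so the decomposition is unique; because the path has no four consecutive up steps, each block has $0\le i\le 3$ up steps, which is precisely the range of indices $j=i-1\in\{-1,0,1,2\}$. This is the same prime-type decomposition already used to derive the recurrence for $S(m,n)$. For the endpoints, a block with $i$ up steps has net displacement $(i+1,\,i-1)$, while the corresponding step $(1,i-1)$ has displacement $(1,\,i-1)$. Writing $a_i$ for the number of blocks of each type, the horizontal total of the $A$-path is $\sum_i (i+1)a_i=\sum_i (i-1)a_i+2\sum_i a_i=n+2m$ and its vertical total is $\sum_i (i-1)a_i=n$, so paths of $A$ ending at $(2m+n,n)$ match paths of $B$ ending at $(m,n)$.

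The one point requiring care, and the main obstacle, is the equivalence of the two ``never go below the $x$-axis'' conditions, since $B$ records heights only at block boundaries. The heights of the $B$-path at its vertices equal the heights of the $A$-path at the corresponding block boundaries, because a step $(1,i-1)$ and its block have the same vertical displacement $i-1$. Within a block of $i$ up steps starting at height $s$, the heights traversed are $s,s+1,\dots,s+i,s+i-1$, whose minimum is $s$ when $i\ge 1$ and $s-1$ when $i=0$; in either case the minimum is attained at one of the two block endpoints, that is, at a $B$-vertex. Hence the global minimum height of the $A$-path equals the minimum height over the vertices of the $B$-path, and one path stays weakly above the $x$-axis precisely when the other does. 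Together with the endpoint bookkeeping this shows $\phi$ and its inverse are mutually inverse weight-preserving maps between $A$ and $B$, giving $S(2m+n,n)=P'_0(m,n)$.

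As an independent check, one can avoid the bijection and argue by induction on $m$ that $f(m,n):=S(2m+n,n)$ satisfies the defining relation $P_0(m+1,n)=\sum_{i\in T}P_0(m,n-i)$. Reindexing the recurrence for $S$ as $S(m',n')=\sum_{i=0}^{3}S(m'-i-1,\,n'-i+1)$ and evaluating it at $(m',n')=(2m+n+2,\,n)$ reproduces exactly $f(m,n+1)+f(m,n)+f(m,n-1)+f(m,n-2)=\sum_{i\in T}f(m,n-i)$. The base column matches as well, since $P_0(0,0)=1$ and $P_0(0,n)=0$ for $n\ge 1$, while $S(n,n)=1$ for $n=0$ and $S(n,n)=0$ for $n\ge 1$ (a path reaching $(n,n)$ consists only of up steps and so cannot end with a down step). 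This yields the same identity by induction.
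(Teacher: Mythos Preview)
Your proof is correct and uses exactly the same bijection as the paper: decompose the $A$-path after each down step into blocks $\nearrow^{\,i}\searrow$ with $0\le i\le 3$ and send each block to the step $(1,i-1)$. You are in fact more careful than the paper, which asserts the bijection but does not explicitly verify the ``never go below the $x$-axis'' equivalence that you check via the block-minimum argument; your alternative inductive check is a welcome addition, though note that the displayed identity there should read $f(m+1,n)=f(m,n+1)+f(m,n)+f(m,n-1)+f(m,n-2)$ rather than the tautological equation you wrote.
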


\begin{proof} Let us be given a generalized Dyck path ending at $(2m+n,n)$ that ends a down $(\searrow)$ step $(1,-1)$. Then since an up step $(\nearrow)$ of the path is $(1,1)$ and a down step $(\searrow)$ is $(1,-1)$, we can compute that the path has $m+n$ up steps and $m$ down steps.

Then it is enough to prove this lemma by presenting a bijection from the set $A$ to the set $B$. Now let us decompose the path at every down step of the path. Then we have the following four kinds of subpaths: $\searrow$, $\nearrow \searrow$, $\nearrow \nearrow \searrow$, and $\nearrow \nearrow \nearrow \searrow$. Now let us convert each subpath $\searrow $ to a $(1,-1)$ step, $\nearrow \searrow $ to a $(1,0)$ step, $\nearrow \nearrow \searrow $ to a $(1,1)$ step, and $\nearrow \nearrow \nearrow \searrow $ to a $(1,2)$ step. Then we can get a path with steps $(1,-1)$, $(1,0)$, $(1,1)$, and $(1,2)$ that never goes below the $x$-axis.
Similarly, when we are given a path with steps $(1,-1)$, $(1,0)$, $(1,1)$, and $(1,2)$ that never goes below the $x$-axis, we can have a generalized Dyck path ending at height at least $0$ by converting each $(1,-1)$ step to $\searrow $, each $(1,0)$ step to $\nearrow \searrow$, each $(1,1)$ step to $\nearrow \nearrow \searrow$, and each $(1,2)$ step to $\nearrow \nearrow \nearrow \searrow$.
\end{proof}

From the recurrence (\ref{eq:216}) for $P'_h(m,n)$, we have the
following lemma.
\begin{lemma}\label{L:241}
Let $T \subset \mathbb{N} \cup \{ -1\}$ be a step set and let $K$ be the largest element of $T$. We assume that $-1 \in T$, $K \neq -1$, $c_K=1$, and $c_{-1}=1$. Then we have for $h, m \in \mathbb{N}$, $P'_h(m,-K-1)=-P'_h(m,0)$.
\end{lemma}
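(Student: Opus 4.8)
The plan is to avoid any induction on $m$ or generating-function computation, and instead to read the identity off a single application of the downward extension recurrence (\ref{eq:216}) at the one row $n=-K-1$. The extension was engineered precisely so that the full recurrence holds below the $x$-axis, and the lemma simply cashes this in against the boundary values. The only facts I will use are the definition of $P'_h$ on the strip $-K\le n\le -1$, where $P'_h(m,n)=P_h(m,n)=0$ because the underlying weighted paths are forbidden to go below the $x$-axis, together with the normalization $c_{-1}=1$.

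First I would specialize (\ref{eq:216}) to $n=-K-1$, which is admissible since $-K-1<-K$. This writes
\[
P'_h(m,-K-1)=P'_h(m+1,-1)-\sum_{i\in T-\{K\}}c_i\,P'_h(m,-1-i).
\]
Next I would bookkeep the second coordinate of every term on the right. The first term sits in row $-1$. In the sum, the term $i=-1$ sits in row $-1-(-1)=0$, while each remaining index $i$ (so $i\in T$ with $0\le i\le K-1$) produces row $-1-i$, and as $i$ runs over $0,1,\dots,K-1$ this sweeps out exactly the rows $-1,-2,\dots,-K$.

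The decisive step is then to invoke the boundary vanishing on the strip $-K\le n\le -1$. Row $-1$ lies in this strip, so $P'_h(m+1,-1)=0$; likewise every summand with $i\ge 0$ has its row index $-1-i$ in $\{-1,\dots,-K\}$, so $P'_h(m,-1-i)=0$. Hence the whole right-hand side collapses to the single term $i=-1$, namely $-c_{-1}P'_h(m,0)$, where the minus sign comes from the sign in front of the sum in (\ref{eq:216}). Using $c_{-1}=1$ gives $P'_h(m,-K-1)=-P'_h(m,0)$, uniformly in $h$ and $m$.

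I do not expect a genuine obstacle: there is no recursion in $m$ and no kernel or generating-function manipulation, only a check of which shifted row indices land in the zero strip. The single point deserving care is that this strip must be nonempty and must contain row $-1$, which is what forces both $P'_h(m+1,-1)$ and every $i\ge 0$ summand to vanish; this holds precisely when $K\ge 1$, the case in which the largest step $K$ is an honest up step, so I would carry out the argument under that hypothesis.
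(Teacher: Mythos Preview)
Your proof is correct and follows exactly the same route as the paper: specialize the extension recurrence (\ref{eq:216}) at $n=-K-1$, use the vanishing $P'_h(m,n)=0$ on the strip $-K\le n\le -1$ to kill $P'_h(m+1,-1)$ and every summand with $i\ge 0$, and keep only the $i=-1$ term. Your caveat about needing $K\ge 1$ for the strip to be nonempty is a fair observation that the paper leaves implicit.
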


\begin{proof}
By the recurrence (\ref{eq:216}) we have
\begin{align*}
P'_h(m,-K-1)&=P'_h(m+1,-1) - \sum_{i \in T-\{K\} }c_i P'_h(m,-1-i)\\
&=P'_h(m+1,-1) -P'_h(m,0)\\
&= -P'_h(m,0),
\end{align*} where the second and third equalities hold because $P'_h(m,n)=0$ for $-K \le n \le -1$.
\end{proof}

Now we are trying to find the sum of the weights of the paths with a given step set $T$. For any $h, K \in \mathbb{N}$, let us define $\mathcal{A}(x)$ to be the generating function for $-P'_h(m,0)$:
{\allowdisplaybreaks
\begin{align}
\label{eq:217}
           \mathcal{A}(x)&=-\sum_{m=0}^\infty P'_h(m,0)x^m=-x^hf(x)^{h+1},
           \intertext{and, for $j \in \mathbb{Z}$, let $\mathcal{A}_j(x)$ be the generating function for $P'_h(m,-K-1-j)$:}
           \mathcal{A}_j(x)&=\sum_{m=0}^\infty P'_h(m,-K-1-j)x^m. \nonumber
        \end{align}
        }
        Then $\mathcal{A}_j(x)=0$ for $-K \le j \le -1$ since $P'_h(m,n)=0$ for $-K \le n \le -1$, and by Lemma \ref{L:241} we know $ \mathcal{A}_0(x)=\mathcal{A}(x)$ and $\mathcal{A}_{-K-1}(x)=- \mathcal{A}(x)$.

Multiplying both sides of the recurrence (\ref{eq:216}) by $x^m$ and summing on $m$
gives
        \begin{equation} \label{eq:218}
        \sum_{m=0}^\infty P'_h(m,n)x^m = \sum_{m=0}^\infty P'_h(m+1,n+K)x^m -
        \sum_{  i \in T - \{K\} }c_i \sum_{m=0}^\infty
        P'_h(m,n+K-i)x^m .
         \end{equation}
By setting $n=-K-1-j$, where $j \ge 0$, in identity (\ref{eq:218}) we
have
        \begin{equation} \label{eq:219}
         \mathcal{A}_j(x) = \frac{1}{x}(\mathcal{A}_{j-K}(x)-\mathcal{A}_{j-K}(0)) -\sum_{  i \in T - \{K\} }
         c_i \mathcal{A}_{i+j-K}(x) .
         \end{equation}
Let $\mathbb{A}(x,t) := \sum_{j=0}^\infty \mathcal{A}_j(x)t^j$. Multiplying both sides of identity (\ref{eq:219}) by $t^j$ and
summing on $j $ gives
       \begin{align} \label{eq:242}
         \mathbb{A}(x,t) = & \frac1{x}\biggl( \sum_{0 \le j \le K-1} \mathcal{A}_{j-K}(x)t^j + t^K
          \mathbb{A}(x,t) - \sum_{0 \le j \le K-1} \mathcal{A}_{j-K}(0)t^j \nonumber \\
          & - t^K\sum_{j=0}^\infty \mathcal{A}_j(0)t^j \biggl) -  \sum_{i \in T-\{K\}} c_i \biggl( \sum_{0 \le j \le K-i-1} \mathcal{A}_{i+j-K}(x)t^j + t^{K-i} \mathbb{A}(x,t) \biggl).
         \end{align}

The fifth term on the right side in (\ref{eq:242}) is $\mathcal{A}(x)$ from the following simplification:
       \begin{align*}
        - \sum_{i \in T-\{K\}} c_i \sum_{0 \le j \le K-i-1} \mathcal{A}_{i+j-K}(x)t^j  = &
         - \sum_{0 \le j \le K}  \mathcal{A}_{j-K-1}(x)t^j \\
         & - \sum_{i \in T-\{K,-1\}} c_i \sum_{0 \le j \le K-i-1} \mathcal{A}_{i+j-K}(x)t^j \\
         =  & - \sum_{0 \le j \le K} \mathcal{A}_{j-K-1}(x)t^j \\
         = &  \mathcal{A}(x),
                           \end{align*}
                           where the second and third equalities hold because $\mathcal{A}_j(x)=0$ for $-K \le j \le -1$ and $\mathcal{A}_{-K-1}(x)=- \mathcal{A}(x)$.

Therefore, collecting terms in (\ref{eq:242}) gives
\begin{equation} \label{eq:220}
         \mathbb{A}(x,t) \Big( 1- \frac{t^K}{x} + \sum_{  i \in T - \{K\} }c_i t^{K-i}
\Big) = \mathcal{A}(x)- \frac{ t^{K}}{x} \sum_{j=0}^\infty \mathcal{A}_j(0)t^j.
         \end{equation}

To find the generating function for $P'_h(0,n)$, we need to determine $\mathbb{A}(0,t)$. To do this, we work in the ring of Laurent series $\mathbb{C}((x))[[t]]$. We cannot set $x=0$ directly in (\ref{eq:220}) but we can use the following lemma to find the constant term in $x$. Let us define $\text{CT}_x$ to be the constant term of a power series in $x$.

\begin{lemma}\label{L:242}
Let $\mathcal{H}(x)$ be a power series in $x$. Let $\alpha$ and $\beta$ be power series in $t$ with no constant term. Then in $\mathbb{C}((x))[[t]]$ the constant term in $x$ of $\mathcal{H}(x) /( 1-\alpha x^{-1}-\beta )$ is $\mathcal{H}\big( \alpha /(1 - \beta)\big) / (1- \beta)$.
\end{lemma}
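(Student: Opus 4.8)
The plan is to clear the negative power of $x$, rewrite the denominator in the form $x-\gamma$ for a suitable power series $\gamma$ in $t$ of positive order, expand geometrically in the correct ring, and then read off the coefficient of $x^0$.

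First I would multiply numerator and denominator by $x$ to obtain
\[
\frac{\mathcal{H}(x)}{1-\alpha x^{-1}-\beta}=\frac{x\,\mathcal{H}(x)}{x(1-\beta)-\alpha}.
\]
Since $\beta$ has no constant term in $t$, the series $1-\beta$ is invertible in $\mathbb{C}[[t]]$, so I may set $\gamma:=\alpha/(1-\beta)$, which is again a power series in $t$ with no constant term, and factor the denominator as $(1-\beta)(x-\gamma)$. This gives
\[
\frac{\mathcal{H}(x)}{1-\alpha x^{-1}-\beta}=\frac{x\,\mathcal{H}(x)}{1-\beta}\cdot\frac{1}{x-\gamma}.
\]

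The key step is to expand $1/(x-\gamma)$ correctly inside $\mathbb{C}((x))[[t]]$. Because $\gamma$ has positive order in $t$, the expansion
\[
\frac{1}{x-\gamma}=\frac{1}{x}\cdot\frac{1}{1-\gamma/x}=\sum_{n=0}^\infty \gamma^n x^{-n-1}
\]
is a legitimate element of $\mathbb{C}((x))[[t]]$: the term $\gamma^n x^{-n-1}$ has $t$-order at least $n$, so for each fixed power of $t$ only finitely many $n$ contribute, and hence only finitely many negative powers of $x$ appear. This is exactly where the hypothesis that $\alpha$ and $\beta$ have no constant term is used; the opposite expansion in powers of $x/\gamma$ would have coefficients of negative $t$-order and so would not lie in this ring.

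Finally I would substitute $\mathcal{H}(x)=\sum_{k=0}^\infty h_k x^k$ and collect, obtaining
\[
\frac{x\,\mathcal{H}(x)}{1-\beta}\sum_{n=0}^\infty \gamma^n x^{-n-1}=\frac{1}{1-\beta}\sum_{k=0}^\infty\sum_{n=0}^\infty h_k\,\gamma^n\,x^{k-n}.
\]
The coefficient of $x^0$ comes precisely from the diagonal $k=n$, and since each $\gamma^n$ has $t$-order at least $n$ this diagonal sum is a well-defined power series in $t$, so that
\[
\text{CT}_x\!\left(\frac{\mathcal{H}(x)}{1-\alpha x^{-1}-\beta}\right)=\frac{1}{1-\beta}\sum_{n=0}^\infty h_n\,\gamma^n=\frac{1}{1-\beta}\,\mathcal{H}\!\left(\frac{\alpha}{1-\beta}\right),
\]
which is the claim. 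I expect the only genuine subtlety to be the bookkeeping in the previous paragraph, namely verifying that the chosen geometric expansion really lives in $\mathbb{C}((x))[[t]]$ and that extracting the constant term in $x$ commutes with the $t$-adic summation; once the no-constant-term hypothesis is invoked these points are clear and the remaining computation is routine.
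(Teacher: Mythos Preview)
Your proof is correct and follows essentially the same approach as the paper: factor out $1-\beta$, expand $1/(1-\gamma x^{-1})$ geometrically with $\gamma=\alpha/(1-\beta)$, and read off the diagonal $k=n$. Your preliminary multiplication by $x$ is a cosmetic variation, and you are in fact more explicit than the paper about why the expansion is valid in $\mathbb{C}((x))[[t]]$.
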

\begin{proof} Let $\mathcal{H}(x)=\sum_{n=0}^\infty a_n x^n$. Then
{\allowdisplaybreaks
\begin{align}
\text{CT}_x {\mathcal{H}(x) \over 1-\alpha x^{-1}-\beta }&=
\text{CT}_x {\mathcal{H}(x) \over (1-\beta)\big(1-\alpha x^{-1}/(1-\beta)\big)}  \nonumber\\
&= {1 \over 1-\beta} \, \text{CT}_x {\mathcal{H}(x) \over \big(1-\alpha x^{-1}/(1-\beta) \big)} \nonumber \\
&= {1\over 1- \beta} \, \text{CT}_x \mathcal{H}(x) \, \sum_{m=0}^\infty\Big(
{\alpha
\over 1- \beta} \Big)^m x^{-m} \nonumber \\
&= {1\over 1- \beta} \, \text{CT}_x \sum_{n=0}^\infty a_n x^n \,
\sum_{m=0}^\infty \Big( {\alpha
\over 1- \beta} \Big)^m x^{-m} \nonumber \\
&= {1\over 1- \beta}\, \mathcal{H}\Big( {\alpha \over 1- \beta} \Big).
\nonumber \qedhere
\end{align}
}
\end{proof}

Now let us find the constant term in $x$ in $\mathbb{A}(x,t)$ by Lemma \ref{L:242}. Dividing identity (\ref{eq:220}) by $1-
t^K/x + \sum_{ i \in T - \{K \} }c_i t^{K-i}$ gives
{\allowdisplaybreaks
\begin{align} \label{eq:221}
\displaystyle \mathbb{A}(x,t) = \frac{ \mathcal{A}(x)}{\displaystyle 1- \frac{t^K}{x} + \sum_{
i \in T - \{K \} }c_i t^{K-i} }  -
\frac{\displaystyle \frac{t^K}{x}  \sum_{j=0}^\infty \mathcal{A}_j(0)t^j}{
\displaystyle1- \frac{t^K}{x} + \sum_{ i \in T - \{K\} }c_i
t^{K-i} }.
\end{align}
}
Since the second term on the right-hand side of
(\ref{eq:221}) contains only negative powers of $x$, we get the constant term in $x$ in $\mathbb{A}(x,t)$ as follows:
\begin{align*}
\text{CT}_x \mathbb{A}(x,t) &= \text{CT}_x \frac{ \mathcal{A}(x)}{1-
\frac{t^K}{x} + \sum_{ i
\in T - \{K \} }c_i t^{K-i} } \nonumber \\
&=\frac{1}{1+ \sum_{ i \in T - \{K \} }c_i t^{K-i} }
 \, \mathcal{A} \Big( \frac{t^K}{ 1+\sum_{ i \in T - \{K \} }c_i t^{K-i} } \Big) \nonumber \\
&=- \frac{t^{hK}}{(1+ \sum_{ i \in T - \{K \} }c_i t^{K-i})^{h+1} }
 \, f \Big( \frac{t^K}{ 1+\sum_{ i \in T - \{K \} }c_i t^{K-i} } \Big)^{h+1}
 ,
\end{align*}
where the second equality follows from Lemma \ref{L:242} and the last equality follows from (\ref{eq:217}).

To simplify the constant term $\mathbb{A}(0,t)$ that we computed above, let us define a power series $g$ by
        \begin{equation} \label{eq:222}
         g(t)=  \frac{1}{\mathcal{B}(t)}f\Big(\frac{t^K}{\mathcal{B}(t)}\Big),  \text{ where }\mathcal{B}(t)=\sum_{ i \in T }c_i t^{K-i} .
         \end{equation}
Then we have
\begin{align*}
-t^{hK}g(t)^{h+1} &= \mathbb{A}(0,t) = \sum_{j=0}^\infty \mathcal{A}_j(0)t^j  \nonumber \\
&= \sum_{j=0}^\infty P'_h(0,-K-1-j)t^j .
\end{align*}
Since $P'_h(0,n)=0$ for $n \ge -K$ except for $P'_h(0,h)=1$, we have
\[ \sum_{n \in \mathbb{Z}} P'_h(0,h-n)t^n = \sum_{n \in \mathbb{N}} P'_h(0,h-n)t^n = 1-g(t)^{h+1}t^{(h+1)(K+1)}. \]

Finally, from the above generating function for $P'_h(0,h-n)$, we can find the generating function for $P'_h(m,n)$. To do this, we rewrite the recurrence $(\ref{eq:216})$ as
\[ P'_h(m+1,n+K)=\sum_{i \in T}c_i P'_h(m,n+K-i). \]
Then replacing $n$ with $Km+h-n$ gives
\[ P'_h \big(m+1,K(m+1)+h-n \big)=\sum_{i \in T}c_i P'_h \big(m,K(m+1)+h-n-i \big). \]
Next, multiplying both sides by $t^n$ and summing on $n$ gives
\[ \sum_{n=0}^\infty P'_h \big(m+1,K(m+1)+h-n\big)t^n= \mathcal{B}(t) \sum_{n=0}^\infty  P'_h(m,Km+h-n)t^n. \]
Finally, we conclude
{\allowdisplaybreaks
\begin{align*}
\sum_{n=0}^\infty P'_h(m,Km+h-n)t^n &= \mathcal{B}(t)^m \sum_{n=0}^\infty  P'_h(0,h-n)t^n \nonumber \\
 &= \mathcal{B}(t)^m \big( 1-g(t)^{h+1}t^{(h+1)(K+1)} \big). \nonumber \\
\end{align*}
}
We summarize with the following theorem.
\begin{theorem}\label{T:241} Let $T \subset \mathbb{N}\cup \{ -1\}$ be a step set and let $K$ be the largest element of $T$. We assume that $-1 \in T$ and $K \neq -1$. Let $P_h(m,n)$ be the sum of the weights of the paths from $(0, h)$ to $(m, n)$ (where $h, m, n \in \mathbb{N}$), with steps $(1, i)$ (where $i \in T$), that never go below the $x$-axis, where we put a weight of $c_i x$ on each step $(1,i)$ (where $c_i$ is an arbitrary weight for all $i \in T$ except $c_{-1} = 1$ and $c_K = 1$). For each $m \ge 0$, define $P'_h(m,n)$ for $n \le Km+h$ by
\begin{align*}
\mathcal{P}_h(x,t) = & \sum_{m,\, n =0}^\infty P'_h(m,Km+h-n)x^m t^n \nonumber \\
= & \Big(1- g(t)^{h+1} t^{(h+1)(K+1)} \Big) \Big(1-x \mathcal{B}(t)
         \Big)^{-1},
\end{align*}
where the power series $g(t)$ is uniquely determined by
\begin{equation} \label{eq:250} \sum_{ i \in T - \{0, -1 \}
} c_i t^{K-i}g(t) \sum_{n=1}^i \big(t^{K+1}g(t) \big)^{n-1}  =1,
\end{equation}
and $P'_h(m,n)=0$ for $n > Km+h$.
Then we have \[ P'_h(m,n)=P_h(m,n) \textrm{ for } m,n \ge 0.\]
\end{theorem}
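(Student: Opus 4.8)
The plan is to observe that the theorem merely collects the computation carried out in the paragraphs preceding it, so the proof reduces to three tasks: assembling the closed form for $\mathcal{P}_h(x,t)$, identifying the power series $g$ of (\ref{eq:222}) with the object characterized by (\ref{eq:250}), and checking the final equality $P'_h(m,n)=P_h(m,n)$ for $m,n\ge 0$. This last task is immediate. By construction $P'_h$ agrees with $P_h$ on the range $n\ge -K$, and since $K\ge 0$ every $n\ge 0$ lies in that range, so the two functions coincide there; moreover the boundary prescription $P'_h(m,n)=0$ for $n>Km+h$ is consistent with $P_h$, because a path from $(0,h)$ using $m$ steps, each of height gain at most $K$, cannot exceed height $Km+h$.

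For the closed form I would retrace the development already given. From the extended recurrence (\ref{eq:216}) I form the row generating functions $\mathcal{A}_j(x)$ and their bivariate sum $\mathbb{A}(x,t)$, reducing the recurrence to the single functional equation (\ref{eq:220}). The key analytic move is to extract $\mathbb{A}(0,t)=\text{CT}_x\,\mathbb{A}(x,t)$: in (\ref{eq:221}) the second summand involves only negative powers of $x$ and contributes nothing, while Lemma \ref{L:242} evaluates the constant term of the first summand and yields $\mathbb{A}(0,t)=-t^{hK}g(t)^{h+1}$ with $g$ as in (\ref{eq:222}). Since $\mathbb{A}(0,t)=\sum_{j\ge 0}P'_h(0,-K-1-j)t^j$, this pins down the generating function of the diagonal values $P'_h(0,h-n)$ as $1-g(t)^{h+1}t^{(h+1)(K+1)}$. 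Rewriting (\ref{eq:216}) in the form $P'_h\big(m+1,K(m+1)+h-n\big)=\sum_{i\in T}c_i P'_h\big(m,K(m+1)+h-n-i\big)$, multiplying by $t^n$ and summing on $n$ shows that passing from $m$ to $m+1$ multiplies the diagonal generating function by $\mathcal{B}(t)$; iterating from $m=0$ produces the factor $\mathcal{B}(t)^m$ and hence the stated form $\mathcal{P}_h(x,t)=\big(1-g(t)^{h+1}t^{(h+1)(K+1)}\big)\big(1-x\mathcal{B}(t)\big)^{-1}$.

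The step I expect to demand the most care is confirming that the $g$ obtained this way satisfies the cleaner characterization (\ref{eq:250}). For this I would substitute $x=t^K/\mathcal{B}(t)$ into the functional equation (\ref{eq:214}). Since $f(x)=\mathcal{B}(t)g(t)$ by (\ref{eq:222}), we get $xf(x)=t^Kg(t)$, so (\ref{eq:214}) becomes $\sum_{i\in T}c_i t^{K-i}g=\sum_{i\in T}c_i\big(t^Kg\big)^{i+1}$. The $i=0$ terms cancel, while the $i=-1$ terms contribute $t^{K+1}g$ on the left and $1$ on the right. Collecting the constant to one side gives $1-t^{K+1}g=\sum_{i\in T,\,i\ge 1}c_i\big(t^{K-i}g-t^{K(i+1)}g^{i+1}\big)$, and each summand factors as $c_i t^{K-i}g\big(1-(t^{K+1}g)^i\big)$. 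Applying $1-u^i=(1-u)\sum_{n=1}^i u^{n-1}$ with $u=t^{K+1}g$ extracts the common factor $1-t^{K+1}g$; since $g(0)=f(0)/\mathcal{B}(0)=1$ the series $t^{K+1}g(t)$ has no constant term, so $1-t^{K+1}g$ is invertible in $\mathbb{C}[[t]]$, and dividing through leaves exactly (\ref{eq:250}). Uniqueness of $g$ as a solution of (\ref{eq:250}) is then clear, since comparing constant terms in $t$ forces $g(0)=1$ (only the $i=K$, $n=1$ term contributes there) and the higher coefficients are determined recursively.
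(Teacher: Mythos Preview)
Your proposal is correct and follows essentially the same route as the paper. The paper's proof likewise reduces everything to showing that the series $g$ defined by (\ref{eq:222}) satisfies (\ref{eq:250}) and is uniquely determined by it; the derivation of $\mathcal{P}_h(x,t)$ and the identity $P'_h=P_h$ for $n\ge 0$ are, as you observe, already contained in the discussion preceding the statement. Your step-by-step factoring out of $1-t^{K+1}g$ is the same computation the paper records as a single displayed factorization, and your uniqueness argument (only the $i=K$, $n=1$ term survives at $t=0$, then recurse) matches the paper's observation that (\ref{eq:250}) takes the form $g(t)=1+t\,\mathcal{T}(t,g(t))$.
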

\begin{proof} It is enough to show that the power series $g$ defined by (\ref{eq:222}) satisfies the functional equation (\ref{eq:250}) and $g$ is uniquely determined by (\ref{eq:250}). From the functional equation (\ref{eq:214}) for $f$, we can find
the functional equation for $g$. Substituting $t^K/ \mathcal{B}(t) $ for $x$ in equation (\ref{eq:214})
and simplifying gives the functional equation for $g$
        \begin{equation} \label{eq:223}
         \mathcal{B}(t)g(t) = \sum_{ i \in T }c_i \big( t^K g(t) \big)^{i+1}.
         \end{equation}
Since $\displaystyle \mathcal{B}(t)=\sum_{ i \in T}c_i t^{K-i} $
where $c_{-1}=1$ and $c_K=1$, our theorem follows from the factorization:

\begin{align*}
\biggl( & \sum_{ i \in T }c_i  t^{K-i} \biggl)g(t) - \sum_{ i
\in T}c_i \big( t^K g(t) \big)^{i+1} \\
& \qquad\qquad = \biggl( t^{K+1} + \sum_{ i \in T - \{0, -1 \} }c_i
t^{K-i} \biggl) g(t) -\biggl(
1+ \sum_{ i \in T - \{0, -1 \} }c_i \big( t^K g(t) \big)^{i+1} \biggl)\\
& \qquad\qquad =  t^{K+1}g(t) + \sum_{ i \in T - \{0, -1 \} }c_i
 t^{K-i}g(t)
-1 - \sum_{ i \in T - \{0, -1 \} }c_i \big( t^K g(t) \big)^{i+1} \\
& \qquad\qquad = \Big(  t^{K+1}g(t) -1 \Big) \biggl( 1 - \sum_{ i \in T - \{0, -1
\} } c_i  t^{K-i}g(t) \sum_{n=1}^i \big( t^{K+1} g(t) \big)^{n-1} \biggl).
\end{align*}

Now we want to show that the power series $g$ is uniquely determined by (\ref{eq:250}). Every term, except $i=K$ and $n=1$, on the left side of (\ref{eq:250}) has a factor $t^j$ for $j \ge 1$ and the term for $i=K$ and $n=1$ is $g(t)$. Then the functional equation (\ref{eq:250}) may be written as
\[g(t)=1+t \mathcal{T}(t,g(t)), \textrm{ for some polynomial $\mathcal{T}$ in $t$}. \]
Equating coefficients gives a unique power series solution. Therefore the power series $g$ is uniquely determined by (\ref{eq:250}).
\end{proof}
Note that in Theorem \ref{T:241} we prefer to take $P'_h(m,Km+h-n)$ instead of $P'_h(m,n)$ as the coefficient of $x^m t^n$ in $\mathcal{P}_h(x,t)$ because we want $\mathcal{P}_h(x,t)$ to be a power series, not a Laurent series.
Comparing the generating function (\ref{eq:248}) for $P_h(m,n)$ with the generating function for $P'_h(m,n)$ in Theorem \ref{T:241}, we can see that the degree of equation (\ref{eq:250}) that $g$ satisfies is one less than the degree of equation (\ref{eq:214}) that $f$ satisfies.
In particular, when the largest element of the step set $T$ is $2$ (that is, $K=2$) we know that the degree of $f$ is $3$ and the degree of the corresponding function $g$ is $2$. So, while the power series $g$ can be easily computed by using the quadratic formula, the power series $f$ cannot be. We will see more details with examples in the following subsection.

\subsection{Corollaries to our main theorem}
First let us consider the case $T=\{-1, 1\}$ and $h=0$. Then we will see that Theorem \ref{T:241} gives (\ref{eq:206}) in this case. Since we defined $B(m,n)$ to be the number of lattice paths from $(1,0)$ to $(m,n)$ (where $m>n$), with unit up steps $(0,1)$ and unit right steps $(1,0)$, that stay below the line $x=y$ we can conclude
\[B(m+1,n)=P'_0(m+n, m-n) \textrm{ for } m \ge n.\]
Since the functional equation for $g$ in Theorem \ref{T:241} is $g(t)=1$, we have
\begin{equation}\label{eq:249}
\sum_{m,n = 0}^\infty  P'_0(m, m-n)x^m t^n = \frac{1-t^2}{1-x(1+t^2)}.
\end{equation}
Replacing $t^2$ with $t$ and $n$ with $2n$ in $(\ref{eq:249})$ gives
\begin{equation}\label{eq:262}
 \sum_{m,n = 0}^\infty
  P'_0(m, m-2n) x^{m} t^n = \frac{1-t}{1-x(1+t)}.
\end{equation}
Next, replacing $m$ with $m+n$ and $t$ with $t/x$ in $(\ref{eq:262})$ gives
\begin{equation}\label{eq:265}
 \sum_{\substack{
               n \ge 0  \\
             m \ge -n }}
  P'_0(m+n, m-n) x^{m} t^n = \frac{x-t}{x(1-x-t)}.
\end{equation}
Since the right side of equation $(\ref{eq:265})$ does not have terms $x^i t^j$ for $i < -1$, we reduce the range from $m \ge -n$ to $m \ge -1$ on the left side.
That is, \begin{equation}\label{eq:267}
 \sum_{\substack{
               n \ge 0  \\
             m \ge -1 }}
  P'_0(m+n, m-n) x^{m} t^n = \frac{x-t}{x(1-x-t)}.
\end{equation}
Finally, multiplying by $x$ and substituting $m+1$ for $m$ in $(\ref{eq:267})$ gives
\begin{equation*}
\sum_{m,n = 0}^\infty  P'_0(m-1+n, m-1-n) x^{m} t^n = \frac{x-t}{1-x-t},
\end{equation*}
which is the same as (\ref{eq:206}) because
\[B(m+1,n)=P'_0(m+n, m-n) \textrm{ and } B'(m,n)=B(m,n) \textrm{ for } 0 \le n \le m. \]

A more interesting case is $T= \{-1,0,1,2\}$ where $K=2$. As we mentioned before, in this case we can easily compute the power series $g$ using the quadratic formula. Then the functional equation for $g$ in Theorem \ref{T:241} becomes $t^3g(t)^2+(1+c_1t)g(t)-1=0$, so
        \begin{equation} \label{eq:252}
         g(t)=  \frac{-1-c_1t + \sqrt{1 + 2c_1t + {c_1}^2 t^2 + 4t^3}}{2t^3},
         \end{equation}
         where the sign of the square root is determined because $g$ is a power series.

Now as a special case we are going to prove the Niederhausen and Sullivan conjecture and give another proof of Theorem \ref{T:221}. For the conjecture we set $h=0$, $c_0=1$, and $c_1=1$ because the paths start at $(0,0)$ and each path with steps $(1,i)$ where $i \in T=\{-1,0,1,2 \}$ is counted once. Then by $(\ref{eq:252})$ we have the power series
\[ g(t) = \frac{-1-t + \sqrt{1+2t+t^2+4t^3}}{2t^3}.\]
That is,
$g(t)=1-t+{t}^{2}-2\,{t}^{3}+4\,{t}^{4}-7\,{t}^{5}+13\,{t}^{6}-26\,{t}^{7}+
52\,{t}^{8}-104\,{t}^{9}+212\,{t}^{10}+
  \cdots .$

So, by Theorem \ref{T:241} we have
the generating function for the
values of $P'_0(m,n)$
\begin{equation*}
\sum_{m,n = 0}^\infty  P'_0(m, 2m-n)x^mt^n = \frac{3+t- \sqrt{1+2t+t^2+4t^3}}{2}\Big( 1-x(1+t+t^2+t^3)
\Big)^{-1},
\end{equation*}
which is the conjecture \cite{NS} of Niederhausen and Sullivan because $S'(m,n)=P'_0(m,2m-n)$ for $m,n \ge 0$. By $P'_0(m,2m-n)=S(4m-n,2m-n)$ where $2m \ge n$, we conclude $S'(m,n) = S(4m-n,2m-n)$ for $0 \le n \le 2m$. That is, $S'(m,n)$ is the number of paths
from $(0,0)$ to $( 4m-n-1, 2m-n+1)$, with up
$(\nearrow)$ steps $(1,1)$ and down $(\searrow)$ steps $(1,-1)$,
that avoid four consecutive up $(\nearrow)$ steps and never go
below the $x$-axis.

For another proof of Theorem \ref{T:221}, first let us consider a lattice path $R$ from $(0,0)$ to $(m,n)$ (where $n \le 2m$), with unit up $(\uparrow)$ steps
$(0,1)$ and unit right $(\rightarrow)$ steps $(1,0)$, that never
crosses the line $y=2x$. To apply Theorem \ref{T:241} we transform the path $R$ into a path from $(0,0)$ to $(m+n,2m-n)$, with up steps $(1,2)$ and down steps $(1,-1)$, that never goes below the $x$-axis. We replace a unit right $(\rightarrow)$ step $(1,0)$ with a step $(1,2)$ and a unit up $(\uparrow)$ step
$(0,1)$ with a step $(1,-1)$. The transformed path does not go below the $x$-axis because when the path $R$ takes a unit right $(\rightarrow)$ step
$(1,0)$ the $y$-coordinate difference between the path $R$ and the
line $y=2x$ increases by $2$, whereas when the path $R$ takes a up
$(\uparrow)$ step $(0,1)$ the $y$-coordinate difference decreases
by $1$. So, we have
\begin{equation}\label{eq:2255}
 D_2(m,n)=P'_{0}(m+n,2m-n) \textrm{ for } 2m \ge n.
  \end{equation}

In Theorem \ref{T:241} we take the step set $T=\{ -1,2\}$. Then we set $h=0$, $c_0=0$, and $c_1=0$ since paths start at $(0,0)$ and have the step set $T=\{-1, 2\}$. Table \ref{T:229} shows the values of $P'_0(m,n)$.
{ \begin{table}
 \centering
\begin{tabular}{c||rrrrrrrr}
  $3$  & $0$ & $0$ & $0$ & $2$ & $0$ & $0$ & $9$ & $0$ \\
  $2$  & $0$ & $1$ & $0$ & $0$ & $3$ & $0$ & $0$ & $12$ \\
  $1$  & $0$ & $0$ & $1$ & $0$ & $0$ & $3$ & $0$ & $0$  \\
  $0$  & $1$ & $0$ & $0$ & $1$ & $0$ & $0$ & $3$ & $0$ \\
  $-1$  & $0$ & $0$ & $0$ & $0$ & $0$ & $0$ & $0$ & $0$  \\
  $-2$  & $0$ & $0$ & $0$ & $0$ & $0$ & $0$ & $0$ & $0$  \\
  $-3$  & $-1$ & $0$ & $0$ & $-1$ & $0$ & $0$ & $-3$ & $0$  \\
  $-4$  & $0$ & $0$ & $0$ & $0$ & $0$ & $0$ & $0$ & $0$ \\
  $-5$  & $0$ & $0$ & $-1$ & $0$ & $0$ & $-3$ & $0$ & $0$  \\
  $-6$  & $1$ & $0$ & $0$ & $1$ & $0$ & $0$ & $3$ & $0$ \\
  $-7$  & $0$ & $-1$ & $0$ & $0$ & $-3$ & $0$ & $0$ & $-12$  \\
  $-8$  & $0$ & $0$ & $2$ & $0$ & $0$ & $6$ & $0$ & $0$ \\
  $-9$  & $-2$ & $0$ & $0$ & $-4$ & $0$ & $0$ & $-15$ & $0$  \\
  $-10$  & $0$ & $3$ & $0$ & $0$ & $9$ & $0$ & $0$ & $36$ \\
  $-11$  & $0$ & $0$ & $-6$ & $0$ & $0$ & $-21$ & $0$ & $0$  \\
  $-12$  & $5$ & $0$ & $0$ & $13$ & $0$ & $0$ & $51$ & $0$ \\
  $-13$  & $0$ & $-9$ & $0$ & $0$ & $-30$ & $0$ & $0$ & $-127$  \\
  $-14$  & $0$ & $0$ & $19$ & $0$ & $0$ & $72$ & $0$ & $0$ \\
   \hline  \hline
  $n/m$  & $0$ & $1$ & $2$ & $3$ & $4$ & $5$ & $6$ & $7$   \\
  \end{tabular}
\caption{The values of $P'_0(m,n)$ }\label{T:229}
\end{table} }

We see from Table \ref{T:204} and Table \ref{T:229} that
\begin{equation}\label{eq:2222}
D_2(m,n)=P'_{0}(m+n,2m-n) \textrm{ for } n \ge 0 \textrm{ and }m+n \ge 0.
\end{equation} We can prove that (\ref{eq:2222}) holds by using the recurrences
(\ref{eq:241}) and (\ref{eq:216}).
There is no combinatorial meaning for $D_2(m,n)$ where $2m<n$ or $n<0$.

  By $(\ref{eq:252})$ we have the power series
\[ g(t) = \frac {-1+\sqrt {1+4t^3}}{2t^3}= \sum_{n=0}^\infty (-1)^nC_n t^{3n}.\]
That is,
$g(t)=1-{t}^{3}+2\,{t}^{6}-5\,{t}^{9}+14\,{t}^{12}-42\,{t}^{15}+132\,{t}^{
18}-429\,{t}^{21}+1430\,{t}^{24} +  \cdots .$
So, by Theorem \ref{T:241} the generating function for the values of $P'_0(m,n)$ is
\begin{equation*}
\sum_{m,n=0}^\infty P'_0(m,2m-n) x^{m}t^{n}
= \Big( 1+ \sum_{n=0}^\infty (-1)^{n+1} C_n t^{3(n+1)}\Big) \Big( 1-x(1+t^3) \Big)^{-1}.
\end{equation*}
Replacing $t$ with $t^{1/3}$ and $3n$ with $n$ gives
\begin{equation*}
\sum_{m,n=0}^\infty P'_0(m,2m-3n) x^{m}t^{n}
 = \Big( 1+ \sum_{n=0}^\infty (-1)^{n+1} C_n t^{n+1}\Big) \Big( 1-x(1+t) \Big)^{-1}.
\end{equation*}
With $D'_2(m,n)$ defined as in (\ref{eq:200}), we know $P'_0(m,2m-3n)=D'_2(m-n,n)$ for $m,n \in \mathbb{N}$. So, we have
 \[ D'_2(-n-1,n+1)=(-1)^{n+1}C_n \textrm{ for all } n \ge 0. \]
From (\ref{eq:2222}) we conclude
  \[ D_2(-n-1,n+1)=(-1)^{n+1}C_n \textrm{ for all } n \ge 0. \]
Also, by $(\ref{eq:2255})$ we deduce
\[ D'_2(m,n)=D_2(m,n) \textrm{ for } 0 \le n \le 2m. \]
That is, $D'_2(m,n)$ is the number of lattice paths from $(0,0)$ to $(m,n)$ (where $0 \le n \le 2m$), with unit up $(\uparrow)$ steps $(0,1)$ and unit right $(\rightarrow)$ steps $(1,0)$, that never cross the line $y=2x$.

For another application of Theorem \ref{T:241} we can find a redundant generating function for $D_p(m,n)$ which was defined in Section \ref{S:203}. To do this let us consider a lattice path $S$ from $(0,0)$ to $(m,n)$ (where $n \le pm$), with unit up $(\uparrow)$ steps
$(0,1)$ and unit right $(\rightarrow)$ steps $(1,0)$, that never
crosses the line $y=px$. To apply Theorem \ref{T:241} we transform the path $S$ into a path from $(0,0)$ to $(m+n,pm-n)$, with up steps $(1,p)$ and down steps $(1,-1)$, that never goes below the $x$-axis. We replace a unit right $(\rightarrow)$ step $(1,0)$ with a step $(1,p)$ and a unit up $(\uparrow)$ step
$(0,1)$ with a step $(1,-1)$. The transformed path does not go below the $x$-axis because when the path $S$ takes a unit right $(\rightarrow)$ step
$(1,0)$ the $y$-coordinate difference between the path $S$ and the
line $y=px$ increases by $p$, whereas when the path $S$ takes a up
$(\uparrow)$ step $(0,1)$ the $y$-coordinate difference decreases
by $1$. So we take the step set $T=\{ -1,p\}$ and
$D_p(m,n)=P'_{0}(m+n,pm-n)$. By Theorem \ref{T:241}, we know that the functional equation for $g(t)$ is
       \begin{equation*}
        g(t)  \sum_{n=1}^p \Big( g(t)t^{p+1} \Big)^{n-1}=1,
         \end{equation*}
         which is equal to $ \sum_{n=1}^p \Big( g(t)t^{p+1} \Big)^{n}= t^{p+1}.$ From this equation we see that $g(t)t^{p+1} $ is a power series in $t^{p+1}$, so we may define a power series $\gamma(t)$ by $\gamma(t^{p+1}) = g(t)t^{p+1}$. Then the equation becomes
                \begin{equation}\label{eq:224}
         \sum_{n=1}^p \gamma(t)^n =t .
         \end{equation}
         That is, $\gamma(t)$ is the compositional inverse of $t+t^2+\cdots +t^p$.
So, by Theorem \ref{T:241} (where $h=0$) we have the generating function for the values of $P'_0(m,n)$ is
\begin{equation}\label{eq:246}
\sum_{m,n=0}^\infty P'_0\big(m,pm-(p+1)n\big) x^{m}t^{n} = \big( 1-g(t)t^{p+1} \big) \big( 1-x(1+t^{p+1}) \big)^{-1}.
\end{equation}

By $D_p(m-n,n)=P'_0\big(m,pm-(p+1)n\big)$ and Theorem \ref{T:241} (where $h=0$), we can deduce the following corollary after replacing $t^{p+1}$ with $t$ and $(p+1)n$ with $n$ in $(\ref{eq:246})$.

\begin{corollary}\label{T:243}
Let $D_p(m,n)$ be the
number of lattice paths from $(0,0)$ to $(m,n)$ (where $n \le
pm$), with unit up $(\uparrow)$ steps $(0,1)$ and unit right
$(\rightarrow)$ steps $(1,0)$, that never cross the line $y=px$.
Define the number $D'_p(m,n)$ for $m,n \in \mathbb{N}$ by
\begin{equation*}
 \sum_{m,\, n =0}^\infty D'_p(m-n,n)x^{m} t^{n}
 =\Big(1-\gamma(t) \Big) \Big(1-x \big(1+t\big)
         \Big)^{-1},
         \end{equation*}
         where the power series $\gamma(t)$ satisfies equation $(\ref{eq:224})$.
         Then we have
         \[D'_p(m,n)=D_p(m,n) \textrm{ for $0 \le n \le pm$}. \]
         \end{corollary}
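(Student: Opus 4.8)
The plan is to recognize Corollary \ref{T:243} as the specialization of Theorem \ref{T:241} to the step set $T=\{-1,p\}$ with $h=0$ and $c_{-1}=c_p=1$, and then to assemble the ingredients already prepared in the preceding discussion. First I would record the path transformation that replaces each unit right step $(1,0)$ by an up step $(1,p)$ and each unit up step $(0,1)$ by a down step $(1,-1)$: since the height of the transformed path after a given prefix equals $p\,x_S-y_S$, the signed vertical distance from the original point $(x_S,y_S)$ to the line $y=px$, the transformed path stays weakly above the $x$-axis exactly when $S$ stays weakly below $y=px$. This yields $D_p(a,b)=P'_0(a+b,\,pa-b)$ for $b\le pa$, and hence, on setting $a=m-n$ and $b=n$, the identity $D_p(m-n,n)=P'_0\big(m,\,pm-(p+1)n\big)$.

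Next I would specialize Theorem \ref{T:241}. With $K=p$ one has $\mathcal{B}(t)=1+t^{p+1}$, and the defining equation (\ref{eq:224})'s parent, equation (\ref{eq:250}), collapses to $g(t)\sum_{n=1}^p\big(g(t)t^{p+1}\big)^{n-1}=1$, that is, $\sum_{n=1}^p\big(g(t)t^{p+1}\big)^n=t^{p+1}$. This forces $g(t)t^{p+1}$ to be a power series in $t^{p+1}$, so I would introduce $\gamma$ through $\gamma(t^{p+1})=g(t)t^{p+1}$, turning the equation into $\sum_{n=1}^p\gamma(t)^n=t$, which is exactly (\ref{eq:224}); rewriting it as $\gamma(t)=t-\sum_{n=2}^p\gamma(t)^n$ exhibits $\gamma$ as the unique power series without constant term solving it, namely the compositional inverse of $u+u^2+\cdots+u^p$. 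Plugging $g$ and $\mathcal{B}$ into the generating function of Theorem \ref{T:241} then gives $\sum_{m,n}P'_0(m,pm-n)x^mt^n=\big(1-\gamma(t^{p+1})\big)\big(1-x(1+t^{p+1})\big)^{-1}$, a power series in $x$ and $t^{p+1}$ alone. Consequently only second coordinates of the form $pm-(p+1)n$ contribute, and the substitution $t^{p+1}\mapsto t$ (equivalently relabeling the exponent $(p+1)n\mapsto n$) produces the reduced form $\big(1-\gamma(t)\big)\big(1-x(1+t)\big)^{-1}$ appearing in (\ref{eq:246}), which is precisely the defining generating function of $D'_p(m-n,n)$. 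Comparing coefficients gives $D'_p(m-n,n)=P'_0\big(m,\,pm-(p+1)n\big)$ for all $m,n\in\mathbb{N}$.

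Finally I would reconcile the two coordinate conventions and restrict to the combinatorial range. Writing $M=m-n$ and $N=n$, both identities read $D'_p(M,N)=P'_0(M+N,\,pM-N)$ and $D_p(M,N)=P'_0(M+N,\,pM-N)$, the latter valid for $N\le pM$; and by Theorem \ref{T:241} we have $P'_0=P_0$ throughout the region where the second coordinate $pM-N$ is nonnegative, i.e.\ exactly for $0\le N\le pM$. Chaining these equalities yields $D'_p(M,N)=D_p(M,N)$ for $0\le N\le pM$, as claimed.

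The one step demanding care is the index bookkeeping in the last paragraph: the two coordinate systems in play, the $(m,n)$ of the generating function and the $(M,N)$ of the path count, must be matched so that the single inequality $pM-N\ge 0$ simultaneously governs validity of the bijection and the agreement $P'_0=P_0$. I expect no genuine difficulty beyond this, since the analytic content is entirely inherited from Theorem \ref{T:241}; the only other point I would state explicitly rather than leave implicit is the legitimacy of the variable change $t^{p+1}\mapsto t$, which rests on the observation that the right-hand side involves only powers of $t^{p+1}$.
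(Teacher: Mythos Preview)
Your proposal is correct and follows essentially the same route as the paper: the path transformation $(1,0)\mapsto(1,p)$, $(0,1)\mapsto(1,-1)$ to identify $D_p$ with $P'_0$, the specialization of Theorem~\ref{T:241} to $T=\{-1,p\}$ with $h=0$, the introduction of $\gamma$ via $\gamma(t^{p+1})=g(t)t^{p+1}$, and the final substitution $t^{p+1}\mapsto t$ are exactly the steps the paper carries out in the discussion immediately preceding the corollary. Your added remarks on the legitimacy of that substitution and on the index matching $(M,N)=(m-n,n)$ simply make explicit what the paper leaves terse.
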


Up to now we considered a path starting at height $0$, that is, $h=0$. Now we want to consider the more general case of a boundary line $y=px+h$ instead of $y=px$ where $h, p \in \mathbb{P}$. For a fixed $h, p \in
\mathbb{P}$, let $E_{p,h}(m,n)$ be the number of lattice paths
from $(0,0)$ to $(m,n)$ (where $n \le pm +h$), with unit up
$(\uparrow)$ steps $(0,1)$ and unit right $(\rightarrow)$ steps
$(1,0)$, that never cross the line $y=px+h$.

Applying the same transformation as before, we have
\[E_{p,h}(m-n,n)=P'_h \big(m,pm+h-(p+1)n\big) \textrm{ with } T=\{-1,p\}. \]
Therefore, by Theorem \ref{T:241} we can conclude the following corollary after replacing $t^{p+1}$ with $t$ and $(p+1)n$ with $n$.

\begin{corollary}\label{T:244}
For $h,p \in \mathbb{N}$, the number $E'_{p,h}(m,n)$ for $m,n \in \mathbb{N}$ is defined by
\begin{equation*}
 \sum_{m,\, n =0}^\infty E'_{p,h}(m-n,n)x^{m} t^{n}
         =\big(1-\gamma(t)^{h+1} \big) \big(1- x (1+t)\big)^{-1},
\end{equation*}
         where the power series $\gamma(t)$ satisfies equation $(\ref{eq:224})$.
         Then we have
         \[ E'_{p,h}(m,n) = E_{p,h}(m,n) \textrm{ for $0 \le n \le pm +h$}. \]
\end{corollary}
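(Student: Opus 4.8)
The plan is to obtain Corollary \ref{T:244} as the specialization of Theorem \ref{T:241} to the step set $T=\{-1,p\}$ with an arbitrary starting height $h$, running verbatim the computation that produced Corollary \ref{T:243} but keeping $h$ general rather than setting $h=0$. First I would record the transformation already described in the paragraph preceding the corollary: replacing each unit right step $(1,0)$ by $(1,p)$ and each unit up step $(0,1)$ by $(1,-1)$ carries a lattice path from $(0,0)$ to $(m-n,n)$ that never crosses $y=px+h$ into a path from $(0,h)$ to $(m,\,pm+h-(p+1)n)$ with steps $(1,-1)$ and $(1,p)$ that never goes below the $x$-axis. The reason the image starts at height $h$ is that the transformed height equals the vertical gap $px+h-y$ between the path and the line, which is $h$ at the origin. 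This yields the numerical identity $E_{p,h}(m-n,n)=P'_h\big(m,\,pm+h-(p+1)n\big)$, valid exactly on the region $0\le n\le pm+h$ where $E_{p,h}$ carries combinatorial meaning.

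Next I would apply Theorem \ref{T:241} with $K=p$. Here $\mathcal{B}(t)=\sum_{i\in T}c_it^{K-i}=1+t^{p+1}$, and the functional equation (\ref{eq:250}) for $g$ collapses, after setting $\gamma(t^{p+1})=g(t)\,t^{p+1}$, to equation (\ref{eq:224}), namely $\sum_{n=1}^p\gamma(t)^n=t$, exactly as in the derivation of Corollary \ref{T:243}. The only place $h$ enters is the factor $g(t)^{h+1}t^{(h+1)(K+1)}$ appearing in $\mathcal{P}_h(x,t)$, and the key algebraic simplification is
\[
g(t)^{h+1}t^{(h+1)(p+1)}=\big(g(t)\,t^{p+1}\big)^{h+1}=\gamma(t^{p+1})^{h+1},
\]
so Theorem \ref{T:241} gives
\[
\sum_{m,n=0}^\infty P'_h\big(m,\,pm+h-n\big)\,x^m t^n=\big(1-\gamma(t^{p+1})^{h+1}\big)\big(1-x(1+t^{p+1})\big)^{-1}.
\]

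Then I would note that the right-hand side is a power series in $t^{p+1}$, which forces $P'_h(m,pm+h-n)=0$ unless $(p+1)\mid n$: combinatorially a path from $(0,h)$ using $a$ steps $(1,p)$ and $m-a$ steps $(1,-1)$ reaches height $h+(p+1)a-m$, so the surviving indices are precisely $n=(p+1)(m-a)$. Substituting $t^{p+1}\to t$ and $(p+1)n'\to n$, and then inserting the transformation identity, turns the left side into $\sum_{m,n}E_{p,h}(m-n,n)\,x^m t^n$ and the right side into exactly the defining series of $E'_{p,h}(m-n,n)$ stated in the corollary. Matching coefficients of $x^mt^n$ then yields $E'_{p,h}=E_{p,h}$ on $0\le n\le pm+h$.

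The hard part will be purely bookkeeping rather than conceptual: I must track the two substitutions and the re-indexing carefully so that the redundant values of $P'_h$ (those with second argument exceeding $pm+h$, which have no combinatorial meaning) are correctly discarded, and I must confirm that the range on which the transformation identity holds coincides exactly with the stated range $0\le n\le pm+h$. Once those index ranges are pinned down, everything else is a direct transcription of the $h=0$ argument behind Corollary \ref{T:243}, with $\gamma$ merely raised to the power $h+1$.
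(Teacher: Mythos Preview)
Your proposal is correct and follows exactly the paper's own argument: the paper records the same transformation identity $E_{p,h}(m-n,n)=P'_h\big(m,pm+h-(p+1)n\big)$ with $T=\{-1,p\}$, invokes Theorem~\ref{T:241}, and then replaces $t^{p+1}$ by $t$ and $(p+1)n$ by $n$, precisely your plan carried over from the $h=0$ case with the single change $\gamma\mapsto\gamma^{h+1}$. The only point to watch in your write-up is that when you state the validity range for $E_{p,h}(m-n,n)$ in the $(m,n)$ generating-function indices it reads $(p+1)n\le pm+h$, which becomes the stated $0\le n\le pm+h$ only after you pass back to the arguments of $E_{p,h}$ itself.
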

Note that in the case $h=0$, Corollary $\ref{T:244}$ reduces to Corollary $\ref{T:243}$ because $E_{p,0}(m,n)=D_p(m,n)$.


%

\noindent \emph{Keywords: Redundant generating functions, Catalan
numbers}

\end{document}